\newcommand*{\rom}[1]{\expandafter\@slowromancap\romannumeral #1@}
\newcommand{\ie}{{\it i.e.},\ }
\newcommand{\thickhline}{%
    \noalign {\ifnum 0=`}\fi \hrule height 1pt
    \futurelet \reserved@a \@xhline
}
\newcolumntype{"}{@{\hskip\tabcolsep\vrule width 1pt\hskip\tabcolsep}}
\pgfplotsset{tick label style={font=\Huge},    label style={font=\Huge},    legend style={font=\Huge}}
\begin{document}

\title{Invariant measures and error bounds for random walks in the quarter-plane based on sums of geometric terms
}

\titlerunning{Invariant measures and error bounds for random walks in the quarter-plane}        

\author{Yanting Chen         \and
        Richard J. Boucherie \and
        Jasper Goseling 
}


\institute{Y. Chen \and R.J. Boucherie \and J. Goseling \at
              Stochastic Operations Research, University of Twente, 
P.O. Box 217, 
7500 AE Enschede, 
The Netherlands\\
              \email{\{y.chen, r.j.boucherie, j.goseling\}@utwente.nl}           
}

\date{Received: date / Accepted: date}

\maketitle

\begin{abstract}
We consider homogeneous random walks in the quarter-plane. The necessary conditions which characterize random walks of which the invariant measure is a sum of geometric terms are provided in~\cite{chen2013necessary,chen2012invariant}. Based on these results, we first develop an algorithm to check whether the invariant measure of a given random walk is a sum of geometric terms. We also provide the explicit form of the invariant measure if it is a sum of geometric terms. Secondly, for random walks of which the invariant measure is not a sum of geometric terms, we provide an approximation scheme to obtain error bounds for the performance measures. Finally, some numerical examples are provided.
\keywords{Random walk \and Quarter-plane \and Geometric terms \and Error bounds \and Performance measure}
\subclass{60G50 \and 60J10}
\end{abstract}

\section{Introduction}
\label{sec:introD}
Random walks in the quarter-plane serve as the underlying models for many two-node queueing systems. It is of great interest to find performance measures, either exactly or approximately, of such systems.


If the invariant measure of the random walk is known in closed-form, then the performance measures can be computed directly. The canonical example is the Jackson network of which the invariant measure is of product-form~\cite[Chapter 6]{wolff1989stochastic}. For some random walks the invariant measure can also be expressed as a linear
combination of countably many geometric terms~\cite{adan1993compensation}. However, if the invariant measure is not of closed-form, then closed-form performance measures are often not available. Various approaches to finding the invariant measure of a random walk in the quarter-plane exist. Most notably, methods from complex analysis have been used to obtain the generating function of the invariant measure~\cite{cohen1983boundary,fayolle1999random}. Matrix-geometric methods provide an algorithmic approach to finding the invariant measure~\cite{neuts1981matrix}. However, explicit closed-form expressions for the invariant measures of random walks are difficult to obtain using the methods mentioned above. Hence, it is, in general, not possible to find exact results for the performance measures of random walks in the quarter-plane. 

Our first contribution in this paper is to characterize the class of random walks for which the invariant measures can be expressed in closed-form. In particular, based on results from~\cite{chen2013necessary,chen2012invariant}, we characterize the random walks for which the invariant measure is a sum of finitely many geometric terms. Similar to the evaluation of the random walks of which the invariant measures are of product-form, the performance measures of such systems can be readily evaluated.
For any given random walk, we provide an algorithm to detect whether its invariant measure is a sum of geometric terms. Moreover, we also explain how to obtain this closed-form invariant measure explicitly, if it exists.

When a closed-form invariant measure for a random walk in the quarter-plane does not exist, we usually have to find approximations for the performance measures we are interested in. Van Dijk et al.~\cite{vandijk11inbook,vandijk88tandem,vandijk88perturb} developed a perturbation theory to approximate the performance of a queueing system by relating it to the performance of a perturbed queueing system of which the stationary distribution is of product-form. The method of van Dijk et al. relies on carefully constructing the modifications that provide the perturbed random walk. Goseling et al.~\cite{goseling2014linear} expressed the upper or lower bound of a performance measure as the value of the objective function of the optimal solution of a linear program. This method generalizes the model modification approach based on the perturbed random walk developed in~\cite{vandijk88tandem,vandijk88perturb,vandijk11inbook} and it accepts any random walk in the quarter-plane as an input. However, the random walk that is used as the perturbed random walk in the approximation scheme of~\cite{goseling2014linear} is still restricted to have an invariant measure that is of product-form, as we will see from some examples that in this paper, large perturbation might be required to obtain a product-form invariant measure. This prevents us from having good approximations for some random walks in the quarter-plane. Our second contribution is to establish an approximation scheme similar to that in~\cite{goseling2014linear}, where the perturbed random walk is allowed to have a sum of geometric terms invariant measure. With this extension, the approximations for performance measures will be improved because we have a larger candidate set for the perturbed random walks. Numerical results also illustrate that better approximations are achieved if we consider a richer candidate set for the perturbed random walks.

The remainder of this paper proceeds as follows. In Section~\ref{sec:modelD}, we present the model and definitions. In Section~\ref{sec:algorithmD}, we provide an algorithm that checks whether the invariant measure of a given random walk is a sum of geometric terms. In Section~\ref{sec:LPapproximationD}, we provide an approximation scheme to bound the performance measures when the invariant measure of the given random walk cannot be a sum of geometric terms.  We consider several examples and show numerical results in Section~\ref{sec:examplesD}. In Section~\ref{sec:discussionD}, we summarize our results and shortly discuss extensions of our approximation scheme.

\section{Model and problem statement} \label{sec:modelD}

%
We consider a two-dimensional random walk $R$ on the pairs  of non-negative integers, \ie $S = \{(i,j), i,j \in \mathbb{N}_0\}$. We refer to $\{(i,j) | i>0, j>0\}$, $\{(i,j) | i>0, j=0\}$, $\{(i,j) | i=0, j>0\}$ and $(0,0)$ as the interior, the horizontal axis, the vertical axis and the origin of the state space, respectively. The transition probability from state $(i,j)$ to state $(i+s,j+t)$ is denoted by $p_{s,t}(i,j)$. Transitions are restricted to the adjoined points (horizontally, vertically and diagonally), \ie $p_{s,t}(k,l)=0$ if $|s|>1$ or $|t|>1$. The random walk is homogeneous in the sense that for each pair $(i,j)$, $(k,l)$ in the interior (respectively on the horizontal axis and on the vertical axis) of the state space
\begin{equation} \label{eq:homogeneousD}
p_{s,t}(i,j)=p_{s,t}(k,l)\quad\text{and}\quad p_{s,t}(i-s,j-t)=p_{s,t}(k-s,l-t),
\end{equation}
for all $-1\leq s\leq 1$ and $-1\leq t\leq 1$.  We introduce, for $i>0$, $j>0$, the notation $p_{s,t}(i,j)=p_{s,t}$, $p_{s,0}(i,0)=h_s$ and $p_{0,t}(0,j)=v_t$.
Note that the first equality of~\eqref{eq:homogeneousD} implies that the transition probabilities for each part of the state space are translation invariant. The second equality ensures that also the transition probabilities entering the same part of the state space are translation invariant. The above definitions imply that $p_{1,0}(0,0)=h_1$ and $p_{0,1}(0,0)=v_1$. The model and notations are illustrated in Figure~\ref{fig:rwD}. 
\begin{figure}
\includegraphics{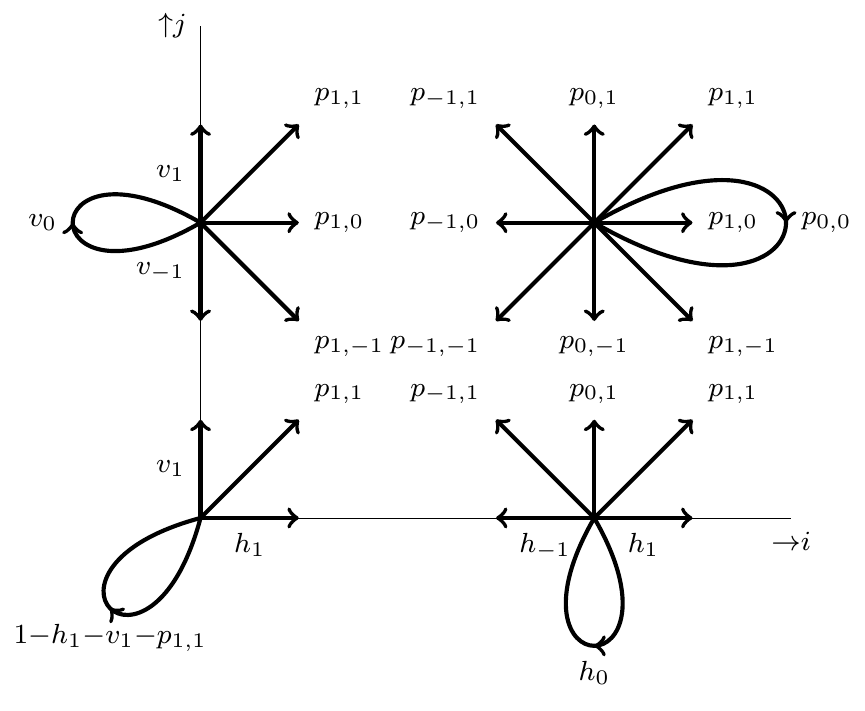}
\caption{Random walk in the quarter-plane. \label{fig:rwD}}
\end{figure}

All the random walks which we consider in this paper are assumed to be irreducible, aperiodic and positive recurrent. Moreover, we assume that at least one of the transition probabilities to North, Northeast or East is non-zero, \ie $p_{0,1} + p_{1,1} + p_{0,1} \neq 0$. The reason is that the case $p_{0,1} + p_{1,1} + p_{0,1} = 0$ which is necessary for applying the compensation approach has been discussed in detail in~\cite{adan1993compensation}.

Let $m$: $S \rightarrow [0,\infty)$ denote the invariant probability measure of $R$, \ie for $i > 0$ and $j > 0$,
\begin{align}
m(i,j) &= \sum_{s = -1}^{1} \sum_{t = -1}^{1} m(i-s,j-t)p_{s,t}, \label{eq:interiorD} \\
m(i,0) &= \sum_{s = -1}^{1} m(i-s,1) p_{s,-1} + \sum_{s = -1}^{1} m(i-s,0)h_s, \label{eq:horizontalD} \\
m(0,j) &= \sum_{t = -1}^{1} m(1,j-t) p_{-1,t} + \sum_{t = -1}^{1} m(0,j-t)v_t. \label{eq:verticalD} 
\end{align}
We will refer to the above equations as the balance equations in the interior, the horizontal axis and the vertical axis of the state space. The balance equation at the origin is implied by the balance equations of all other states and is, therefore, not considered.

Our interest is in the steady-state performance of $R$. The performance measures that we consider are induced by functions that are linear in each part of the state space, \ie linear in the interior, on the horizontal axis and on the vertical axis. More formally, we consider the performance measure $\mathcal{F}$, defined as
\begin{equation} \label{eq:FdefD}
\mathcal{F} = \sum_{(i,j) \in S} m(i,j) F(i,j),
\end{equation}
where $F: S \rightarrow [0,\infty)$ is defined as
\begin{equation}{\label{eq:performancemeasureD}}
F(i,j) =
\begin{cases}
f_{1,0} + f_{1,1} i, \quad &\text{if } i>0\text{ and }j=0,\\
f_{2,0} + f_{2,2} j, \quad &\text{if } i=0\text{ and }j>0,\\
f_{3,0}, \quad &\text{if } i=j=0,\\
f_{4,0} + f_{4,1} i + f_{4,2} j, \quad &\text{if } i>0\text{ and }j>0,
\end{cases}
\end{equation}
and the $f_{p,q}$ are constants that define the function.

If $m$ is a sum of geometric terms, then the performance measure $\mathcal{F}$ can be immediately obtained from~\eqref{eq:FdefD}. In Section~\ref{sec:sumsD} we will introduce such an invariant measure, \ie a sum of geometric terms. In addition, we provide a complete characterization of the random walk of which the invariant measure is a sum of geometric terms. We also provide an algorithm to detect whether the invariant measure of a given random walk is a sum of geometric terms.

For the random walk of which the invariant measure is not a sum of geometric terms, we resort to deriving lower and upper bounds on $\mathcal{F}$. These bounds are constructed in Section~\ref{sec:LPapproximationD}. Measures that are a sum of geometric terms, defined next in Section~\ref{sec:sumsD}, form the basis for these bounds.

\section{Random walks with an invariant measure that is a sum of geometric terms}{\label{sec:algorithmD}} \label{sec:sumsD}

In this section, we will see that not all linear combination of geometric terms may yield an invariant measure of a random walk. We first characterize the linear combination of geometric terms that can be the invariant measure of a random walk. Then, we provide an algorithm to check whether the invariant measure of the given random walk is a sum of geometric terms. We apply this algorithm to several random walks. Finally, we show that the running time for the algorithm is finite. We also explain how to obtain the invariant measure explicitly, if it is a sum of geometric terms.

\subsection{Preliminaries} \label{sec:defD}
We are interested in measures that can be expressed as a linear combination of geometric measures. We first introduce the following geometric measure.
\begin{definition}[Geometric measure] \label{def:geomD}
The measure $m(i,j)$ is a geometric measure if $m(i,j) = \rho^i \sigma^j$ for some $(\rho, \sigma) \in (0,1)^2$. 
\end{definition}
We represent a geometric measure $\rho^i \sigma^j$ by its coordinate $(\rho, \sigma)$ in $[0, 1)^2$. Then $\Gamma \subset [0,\infty)^2$ characterizes a set of geometric measures. To identify the geometric measures that satisfy the balance equations in the interior, on the horizontal axis and on the vertical axis of the state space, we introduce the polynomials
\begin{align}
Q(x,y) &= xy\left(\sum_{s = -1}^{1} \sum_{t = -1}^{1} x^{-s} y^{-t} p_{s,t} - 1\right),   {\label{eq:QfunctionD}}  \\
H(x,y) &= xy\left(\sum_{s = -1}^{1}  x^{-s} h_{s} + y\left(\sum_{s = -1}^{1} x^{-s} p_{s,-1}\right) - 1\right),  {\label{eq:HfunctionD}}  \\
V(x,y) &= xy\left(\sum_{t = -1}^{1}  y^{-t} v_{t} + x\left(\sum_{t = -1}^{1} y^{-t} p_{-1,t}\right) - 1\right),  {\label{eq:VfunctionD}}  
\end{align}
respectively. For example, $Q(\rho,\sigma) = 0$, $H(\rho, \sigma) = 0$ and $V(\rho, \sigma) = 0$ imply that $m(i,j) = \rho^i \sigma^j$, $(i,j) \in S$ satisfies~\eqref{eq:interiorD},~\eqref{eq:horizontalD} and~\eqref{eq:verticalD}, respectively. If all balance equations are satisfied, then $m(i,j)$ is the invariant measure of the random walk $R$. Let the curves $Q$, $H$ and $V$ denote the sets of $(x,y)$ restricted to $[0, \infty)^2$, satisfying $Q(x,y) = 0$, $H(x,y) = 0$ and $V(x,y) = 0$.

Next, we analyze the measures that are sums of geometric measures.
\begin{definition}[{Induced measure}]\label{def:inducedD}
The measure $m$ is called induced by $\Gamma\subset (0, \infty)^2$ if
\begin{equation*}
m(i,j) = \sum_{(\rho, \sigma) \in \Gamma} \alpha(\rho, \sigma)\rho^i\sigma^j,
\end{equation*}
with $\alpha(\rho, \sigma) \in \mathbb{R}\backslash \{0\}$ for all $(\rho, \sigma) \in \Gamma$.
\end{definition}

We have excluded in Definitions~\ref{def:geomD} and~\ref{def:inducedD} the case where $\rho=0$ or $\sigma=0$, \ie the case of degenerate geometric measures. The reason is that it has been shown in~\cite{chen2012invariant} that linear combinations that contain degenerate geometric measures cannot be the invariant measure for any random walk.

Since we have assumed that $p_{1,0} + p_{1,1} + p_{0,1} \neq 0$, we know from~\cite{chen2013necessary,chen2012invariant} that in this case, $|\Gamma| < \infty$, \ie there are finitely many geometric terms in the set $\Gamma$, if 
\begin{equation*}
m(i,j) = \sum_{(\rho,\sigma) \in \Gamma} \alpha(\rho, \sigma)\rho^i\sigma^j
\end{equation*}
is the invariant measure of the random walk.

\subsection{Characterization}
We first provide conditions which the set $\Gamma$ must satisfy such that the induced measure of $\Gamma$ may be the invariant measure of a random walk. This result provides the theoretical support for the Detection Algorithm that will be presented in the next subsection. The Detection Algorithm determines whether the invariant measure of a given random walk is a sum of geometric terms.

The results in this and subsequent sections are based on the notion of uncoupled partitions of $\Gamma$ and pairwise-coupled set, which were first introduced in~\cite{chen2012invariant}.

\begin{definition}[Uncoupled partition]
A partition $\{\Gamma_1, \Gamma_2, \cdots\}$ of $\Gamma$ is \emph{horizontally uncoupled} if $(\rho, \sigma) \in \Gamma_p$ and $(\tilde{\rho}, \tilde{\sigma}) \in \Gamma_q$ for $p \neq q$, implies that $\tilde{\rho} \neq \rho$, \emph{vertically uncoupled} if $(\rho, \sigma) \in \Gamma_p$ and $(\tilde{\rho}, \tilde{\sigma}) \in \Gamma_q$ for $p \neq q$, implies $\tilde{\sigma} \neq \sigma$, and \emph{uncoupled} if it is both horizontally and vertically uncoupled.
\end{definition}
We call a partition with the largest number of sets a maximal partition. It has been shown in~\cite{chen2012invariant} that the maximal horizontally uncoupled partition, the maximal vertically uncoupled partition and the maximal uncoupled partition are unique.

\begin{definition}[Pairwise-coupled set]
A set $\Gamma$ is pairwise-coupled if and only if the maximal uncoupled partition of $\Gamma$ contains only one set.
\end{definition}

Let $H_{set}$ and $V_{set}$ be the intersections of $Q$ with $H$ and $V$, which are restricted to the unit square, \ie
\begin{align}
 H_{set} &= \{(x,y) \in (0,1)^2 | (x,y) \in Q \cap H\}, \\
 V_{set} &= \{(x,y) \in (0,1)^2 | (x,y) \in Q \cap V\}.
\end{align}
We first present the following lemma.

\begin{lemma}{\label{lem:HVsetD}}
We have $|H_{set}| \leq 3$ and $|V_{set}| \leq 3$.
\end{lemma}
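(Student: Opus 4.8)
The plan is to bound $|H_{set}|$; the bound on $|V_{set}|$ follows by the symmetry $i\leftrightarrow j$, $\rho\leftrightarrow\sigma$. First I would observe that $H_{set}$ consists of the common zeros in $(0,1)^2$ of the two polynomials $Q(x,y)$ and $H(x,y)$ defined in~\eqref{eq:QfunctionD} and~\eqref{eq:HfunctionD}. After dividing out the factor $xy$ (which is nonzero on $(0,1)^2$), $Q(x,y)=0$ is equivalent to $\sum_{s,t} x^{-s}y^{-t}p_{s,t}=1$; multiplying through by $xy$ this is a polynomial equation in $(x,y)$ that is quadratic in $x$ and quadratic in $y$. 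Likewise $H(x,y)=0$ is equivalent to $\sum_s x^{-s}h_s + y\sum_s x^{-s}p_{s,-1}=1$; multiplying by $x$ one gets a polynomial that is quadratic in $x$ and of degree (at most) $1$ in $y$. So $H_{set}$ is contained in the intersection of an affine-in-$y$ curve with a quadratic-in-$y$ curve.

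The key step is to eliminate one variable. Writing the $Q$-relation as $A(x)y^{2}+B(x)y+C(x)=0$ and the $H$-relation as $D(x)y+E(x)=0$ (where $A,B,C,D,E$ are polynomials in $x$ of degree at most $2$), I would substitute $y=-E(x)/D(x)$ from the second equation into the first, clear denominators, and obtain a single polynomial equation $P(x)=A(x)E(x)^2 - B(x)E(x)D(x) + C(x)D(x)^2=0$ in $x$ alone. A naive degree count gives $\deg P\le 6$, which is not yet good enough. The main obstacle is therefore to show that this resultant polynomial in fact has at most $3$ roots in $(0,1)$ — equivalently, that the extra factors of degree $3$ are spurious (e.g. correspond to points with $x=0$, $x=1$, $y\notin(0,1)$, or to a common factor of $D$ with $A,B,C$). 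I would handle this by examining the structure of the coefficients: the polynomials $Q$ and $H$ share the monomials coming from $h_s$ versus $p_{s,0}$-type terms, and one can factor out the contributions that force $x\in\{0,1\}$ or $y\in\{0,1\}$ — note that $(1,1)$ always lies on $Q$, $H$ and $V$ since the transition probabilities sum to $1$, so $x=1$ (and correspondingly a root of $P$) is automatically present and must be excluded, and similarly the boundary behaviour coming from the factor $xy$ in the definitions contributes further spurious roots. A careful bookkeeping of these forced roots should bring the count of genuine roots in $(0,1)$ down to $3$.

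Having bounded the number of admissible $x$-coordinates by $3$, I would finish by noting that for each such $x$ the $H$-relation $D(x)y+E(x)=0$ determines $y$ uniquely (when $D(x)\ne 0$; the degenerate case $D(x)=0$ can be argued away using irreducibility/aperiodicity assumptions on $R$, or absorbed into the boundary analysis), so $|H_{set}|\le 3$. Then the identical argument with the roles of the horizontal and vertical axes interchanged, using $V(x,y)$ in place of $H(x,y)$ — which is affine in $x$ and quadratic in $y$ — yields $|V_{set}|\le 3$. The delicate part throughout is the precise identification and removal of the spurious factors in the resultant; I would expect to use explicitly the forms~\eqref{eq:QfunctionD} and~\eqref{eq:HfunctionD}, in particular that $H$ and $Q$ agree on the ``$x$-only'' part in a way that produces a common factor, to get from the crude bound $6$ down to the claimed $3$.
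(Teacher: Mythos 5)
Your overall strategy coincides with the paper's: eliminate $y$ via the affine $H$-relation, substitute into the quadratic $Q$-relation, and count roots in $(0,1)$ of the resulting univariate polynomial, using the intersection on the line $x=1$ to discard one root. The gap is precisely in the step you yourself flag as the main obstacle, namely getting from the naive degree bound $6$ down to at most $3$ admissible roots, and the two mechanisms you propose for it do not work. First, $(1,1)$ does \emph{not} in general lie on $H$ or $V$: since $\sum_s h_s + \sum_s p_{s,1}=1$, one computes $H(1,1)=\sum_s p_{s,-1}-\sum_s p_{s,1}$, which vanishes only in special cases. The point of $Q\cap H$ on the line $x=1$ is $\bigl(1,\sum_s p_{s,1}/\sum_s p_{s,-1}\bigr)$ (this is the point the paper exhibits), so $x=1$ is indeed always a root of the eliminant, but not for the reason you give. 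Second, the prefactor $xy$ in~\eqref{eq:QfunctionD}--\eqref{eq:HfunctionD} is nonvanishing on $(0,1)^2$ and contributes no discardable roots.

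The observation that actually closes the gap is that, in your notation, $A(x)=D(x)=\sum_{s=-1}^{1}p_{s,-1}x^{1-s}$: the coefficient of $y^{2}$ in $Q$ and the coefficient of $y$ in $H/y$ are the \emph{same} polynomial (both come from the $p_{s,-1}$ transitions), not the ``$x$-only'' parts as you suggest. Consequently $P=AE^{2}-BDE+CD^{2}=D\cdot\bigl(E^{2}-BE+CD\bigr)$, and since $D$ has nonnegative coefficients, not all zero (else the walk could not return to the horizontal axis from the interior), $D>0$ on $(0,1)$ and contributes no roots there. The genuine roots are therefore those of the quartic $E^{2}-BE+CD$ --- this is exactly the degree-$4$ polynomial the paper invokes --- and a direct evaluation shows it vanishes at $x=1$, leaving at most $3$ roots in $(0,1)$, each determining $y$ uniquely through $Dy+E=0$. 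With this identity supplied, the rest of your argument (the symmetric treatment of $V_{set}$ and the handling of the degenerate case $D(x)=0$) goes through.
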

\begin{proof}
Without loss of generality, we only consider the intersections of $Q$ and $H$. It can be readily verified that the horizontal coordinates of the intersections of $Q$ with $H$ are the solutions of a polynomial of degree $4$ equating $0$ by combining Equations~\eqref{eq:interiorD} and~\eqref{eq:horizontalD}. Moreover, it is easy to verify that $(1, \frac{\sum_{s = -1}^{1} p_{s,1}}{\sum_{s = -1}^{1} p_{s,-1}})$ is an intersection of $Q$ and $H$. 
\end{proof}

We are now ready to present the next theorem, the proof of which is given in Appendix~\ref{app:oddD}. The theorem and its proof are built on the results from~\cite{chen2013necessary,chen2012invariant}. The theorem characterizes the sets $\Gamma$ for which the measure induced by $\Gamma$ may be the invariant measure of a random walk. The result states that $\Gamma$ must be a pairwise-coupled set connecting two geometric terms from set $H_{set} \cup V_{set}$. If both geometric terms are from $H_{set}$ or both of them are from $V_{set}$, then the pairwise-coupled set contains an even number of geometric terms. If one of the geometric terms is from $H_{set}$ and the other is from $V_{set}$, then the pairwise-coupled set contains an odd number of geometric terms.

We exclude the case where $|\Gamma| = 1$ here, \ie we do not consider the product-form invariant measures, because the characterization of this case has already been extensively studied.
\begin{theorem}{\label{thm:oddD}}
If the invariant measure of the random walk $R$ is induced by a set $\Gamma$, where $1 < |\Gamma| < \infty$, then $\Gamma$ is pairwise-coupled and there exist unique $(\rho_1, \sigma_1), (\rho_2, \sigma_2) \in \Gamma$ where $(\rho_1, \sigma_1) \neq (\rho_2, \sigma_2)$ such that
\begin{enumerate}
\item  $(\rho_1, \sigma_1), (\rho_2, \sigma_2) \in H_{set} \cup V_{set}$.
\item For $k = 1,2$, if $(\rho_k, \sigma_k) \in H_{set}$, then there exist a $(\rho, \sigma) \in \Gamma \backslash (\rho_k, \sigma_k)$ such that $\sigma = \sigma_k$ and there does not exist a $(\rho, \sigma) \in \Gamma \backslash (\rho_k, \sigma_k)$ such that $\rho = \rho_k$. Similarly, if $(\rho_k, \sigma_k) \in V_{set}$, then there exists a $(\rho, \sigma) \in \Gamma \backslash (\rho_k, \sigma_k)$ such that $\rho = \rho_k$ and there does not exist a $(\rho, \sigma) \in \Gamma \backslash (\rho_k, \sigma_k)$ such that $\sigma = \sigma_k$.

\item If $(\rho_1, \sigma_1) \in H_{set}$ and $(\rho_2, \sigma_2) \in V_{set}$, then $|\Gamma| = 2k+1$, where $k = 1,2,3, \cdots$. Otherwise, we have $|\Gamma| = 2k$, where $k = 1,2,3, \cdots$. 
\end{enumerate}
\end{theorem}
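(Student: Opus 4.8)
The plan is to reduce Theorem~\ref{thm:oddD} to the structural results already established in~\cite{chen2012invariant,chen2013necessary}, and then to use balance equations on the axes to pin down the two distinguished terms. First I would recall the two facts we are allowed to use: since $p_{1,0}+p_{1,1}+p_{0,1}\neq 0$, an invariant measure that is induced by $\Gamma$ has $|\Gamma|<\infty$, and moreover (from~\cite{chen2012invariant}) such a $\Gamma$ must decompose along its maximal uncoupled partition into components each of which individually satisfies the interior balance equations and is itself pairwise-coupled. The first step is therefore to argue that the maximal uncoupled partition of $\Gamma$ consists of a single set, i.e.\ $\Gamma$ is pairwise-coupled: if there were two or more uncoupled components, each would have to separately satisfy the horizontal and vertical balance equations with its own coefficients; I would invoke the known result that a nonempty set satisfying \emph{all} balance equations must already be the full invariant measure (by uniqueness of the invariant measure up to scaling), which forces the partition to be trivial. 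This is essentially a citation-level step.

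Next I would set up the "walk along $\Gamma$" picture. Because $\Gamma$ is pairwise-coupled, order its elements so that consecutive ones share either a $\rho$-coordinate (a vertical link) or a $\sigma$-coordinate (a horizontal link), alternating in the sense dictated by the structure theorem of~\cite{chen2012invariant}; since all points of $\Gamma$ lie on the curve $Q$ (interior balance), each point is determined by its partner via the quadratic $Q(x,\cdot)=0$ or $Q(\cdot,y)=0$. The heart of the argument is the behavior at the two "ends" of this chain. Consider the horizontal balance equation~\eqref{eq:horizontalD}: substituting the induced measure and grouping terms that share a $\sigma$-coordinate, one finds that for the equation to hold, the terms must cancel in $\sigma$-groups, and within each $\sigma$-group either $H(\rho,\sigma)=0$ for the unique lowest/"boundary" term of that group, or the group has an even cardinality whose contributions telescope. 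The standard lemma (again from~\cite{chen2012invariant}) is that exactly the terminal terms of the chain — the ones not matched on one side — must lie on $H$ or on $V$; these are the $(\rho_1,\sigma_1),(\rho_2,\sigma_2)$. Combined with the interior requirement $Q=0$, they lie in $Q\cap H$ or $Q\cap V$ restricted to $(0,1)^2$, i.e.\ in $H_{set}\cup V_{set}$, giving item~1. Item~2 is then just the statement that such a terminal term is "free" (unmatched) in exactly the coordinate complementary to the curve it sits on: a term on $H_{set}$ is matched in $\sigma$ (to continue the chain inward) but unmatched in $\rho$, and symmetrically for $V_{set}$.

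For item~3 I would do a parity count. Starting from the $H_{set}$-end, the chain alternates horizontal and vertical links; each horizontal link pairs two terms sharing a $\sigma$, each vertical link two sharing a $\rho$. Walking from one terminal term to the other, the type of the final link is forced by which curve the far endpoint sits on: if both endpoints are on $H_{set}$ (matched in $\sigma$, free in $\rho$), the number of vertical links must make the coordinate-type return to the same state, forcing an even total $|\Gamma|=2k$; if one endpoint is on $H_{set}$ and the other on $V_{set}$, the parity flips and $|\Gamma|=2k+1$. I would phrase this as: assign to the $\ell$-th element of the chain the parity of the number of horizontal links traversed so far; the endpoint conditions fix this parity at both ends, and reading off the length gives the claim. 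Uniqueness of $(\rho_1,\sigma_1),(\rho_2,\sigma_2)$ follows because in a pairwise-coupled chain there are exactly two unmatched endpoints.

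The main obstacle I anticipate is the cancellation bookkeeping in the axis balance equations — showing rigorously that the only way~\eqref{eq:horizontalD} and~\eqref{eq:verticalD} can hold for an induced measure is that the chain's terminal terms individually satisfy $H=0$ or $V=0$, with everything else cancelling in coupled pairs. This is where the detailed algebra of~\cite{chen2012invariant,chen2013necessary} is really needed, and where I would lean most heavily on those references rather than reproving from scratch; the parity argument and the pairwise-coupledness reduction are comparatively routine once that lemma is in hand. Finiteness of $\Gamma$ is not an obstacle since it is given as a hypothesis ($|\Gamma|<\infty$) and also follows from the cited results under the running assumption $p_{1,0}+p_{1,1}+p_{0,1}\neq 0$.
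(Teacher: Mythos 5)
Your overall architecture matches the paper's: cite pairwise-coupledness from the earlier work, identify the two unmatched ``endpoints'' of the coupling structure, place them on $H_{set}\cup V_{set}$ via the decomposed axis balance equations (Lemma~\ref{lem:6D}), and finish with a parity count. But there is a genuine gap: you assume throughout that the coupling structure of $\Gamma$ is a \emph{chain} with two terminal elements, and both your existence claim and your uniqueness claim for $(\rho_1,\sigma_1),(\rho_2,\sigma_2)$ rest on this. Since each point of $Q$ has at most one partner with the same $\rho$ (the other root of the quadratic $Q(\rho,\cdot)=0$) and at most one with the same $\sigma$, the coupling graph has maximum degree two, so a pairwise-coupled (connected) set $\Gamma$ is either a path or a \emph{cycle}. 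In the cycle case there are no unmatched terms at all: every $\rho$-group and every $\sigma$-group in $B^h$ and $B^v$ has exactly two elements, so the balance equations could a priori be satisfied by pairwise cancellation without any element of $\Gamma$ lying on $H$ or $V$, and your ``terminal terms must lie on $H$ or $V$'' step has nothing to bite on. Ruling out the cycle is precisely the content of the paper's Lemma~\ref{lem:cycleD}, which is the main new technical ingredient of the appendix: a cycle would force two maximal descending staircases of $\Gamma$ to meet in the region $Q_r$ beyond the branch point, placing an element of $\Gamma$ on the monotone arc $Q_{11}$, which lies outside the unit square (since $x_r\geq 1$ and $y_t\geq 1$ by Fayolle's lemma), contradicting finiteness of the measure. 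This branch-point and monotonicity argument appears nowhere in your proposal, and it is not a citation-level step; without it the existence of the two distinguished terms is unproved.

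A smaller remark: once the no-cycle property is in hand, your uniqueness argument (``a path has exactly two endpoints'') is fine and arguably cleaner than the paper's, which instead argues that a non-unique pair would yield two independent signed measures satisfying all balance equations, contradicting uniqueness of the invariant measure. The remaining steps --- item~2 via $B^h(\rho_k,\sigma_k)=0$ forcing the endpoint to be unmatched in the complementary coordinate, and the parity count for item~3 --- are in line with what the paper does.
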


\subsection{The Detection Algorithm}
We next introduce an algorithm which checks whether the invariant measure of a given random walk is a sum of geometric terms. We call this algorithm the Detection Algorithm. The Detection Algorithm is based on the construction of several pairwise-coupled sets. In particular, we construct one such set for each of the elements in $H_{set} \cup V_{set}$. If such a pairwise-coupled set contains another geometric term from $H_{set} \cup V_{set}$ before it goes outside of the unit square, then the invariant measure of the random walk may be a sum of geometric terms. Next, the algorithm checks whether the geometric terms from the pairwise-coupled set are coupled in a correct manner, if Condition 2 in Theorem~\ref{thm:oddD} is satisfied. Notice that because of Theorem~\ref{thm:oddD}, if such a pairwise-coupled set exists, then it must be unique.

\noindent\rule{11.7cm}{0.4pt} \\
\textbf{The Detection Algorithm}: Check whether the invariant measure of a random walk $R$ is a sum of geometric terms.\\
\noindent\rule{11.7cm}{0.4pt} \\
\noindent Input: The random walk $R$: $p_{s,t}$, $h_s$, $v_t$ for $s,t \in \{-1,0,1\}$.\\
{\bf Step 1}: Compute $H_{set}$ and $V_{set}$.\\
{\bf Step 2}: Let $(\rho_1, \sigma_1) \in H_{set}$. We construct a set $\Gamma^{H}_1$ as follows: we have $\rho_{2k} \neq \rho_{2k-1}$, $\sigma_{2k} = \sigma_{2k-1}$ and $\rho_{2k+1} = \rho_{2k}$, $\sigma_{2k+1} \neq \sigma_{2k}$ for $k = 1,2,3, \cdots$. We continue this procedure until we have $(\rho_n, \sigma_n) \in (0,1)^2$ and $(\rho_{n+1}, \sigma_{n+1}) \notin (0,1)^2$. We denote this pairwise-coupled set with $n$ elements by $\Gamma^{H}_1$ where $\Gamma^{H}_1 = \{(\rho_1, \sigma_1), (\rho_2, \sigma_2), \cdots, (\rho_n, \sigma_n)\}$. We repeat this procedure for other elements from $H_{set}$. The resulting pairwise-coupled sets are denoted by $\Gamma^{H}_a$ where $a \leq 3$ and denotes the index of the element in $H_{set}$.\\
{\bf Step 3}: Let $(\rho_1, \sigma_1) \in V_{set}$. We construct a set $\Gamma^{V}_1$ as follows: we have $\rho_{2k} = \rho_{2k-1}$, $\sigma_{2k} \neq \sigma_{2k-1}$ and $\rho_{2k+1} \neq \rho_{2k}$, $\sigma_{2k+1} = \sigma_{2k}$ for $k = 1,2,3, \cdots$. 
We continue this procedure until we have $(\rho_n, \sigma_n) \in (0,1)^2$ and $(\rho_{n+1}, \sigma_{n+1}) \notin (0,1)^2$. We denote this pairwise-coupled set with $n$ elements by $\Gamma^{V}_1$ where $\Gamma^{V}_1 = \{(\rho_1, \sigma_1), (\rho_2, \sigma_2), \cdots, (\rho_n, \sigma_n)\}$. We repeat this procedure for other elements from $V_{set}$. The resulting pairwise-coupled sets are denoted by $\Gamma^{V}_b$ where $b \leq 3$ and denotes the index of the element in $V_{set}$.\\
{\bf Step 4}: Check whether the geometric terms from the pairwise-coupled set are coupled in a correct manner.
\begin{enumerate}
\item Consider $\Gamma^{H}_1$. If there exists $k_0 \in \mathbb{N}_0$ such that the element $(\rho_{2k_0}, \sigma_{2k_0}) \in H_{set}$, then the invariant measure of $R$ is a sum of geometric terms. Moreover, the invariant measure is induced by the set $\{(\rho_1, \sigma_1), \dots, (\rho_{2k_0}, \sigma_{2k_0})\} \subset \Gamma^{H}_1$. We repeat this procedure for all $\Gamma^H_a$ where $a \leq 3$.

\item Consider $\Gamma^{H}_1$. If there exists $k_0 \in \mathbb{N}_0$ such that the element $(\rho_{2k_0 + 1}, \sigma_{2k_0 + 1}) \in V_{set}$, then the invariant measure of $R$ is a sum of geometric terms. Moreover, the invariant measure is induced by $\{(\rho_1, \sigma_1), \dots, (\rho_{2k_0 + 1}, \sigma_{2k_0 + 1})\} \subset \Gamma^{H}_1$. We repeat this procedure for all $\Gamma^H_a$ where $a \leq 3$.

\item Consider $\Gamma^{V}_1$. If there exists $k_0 \in \mathbb{N}_0$ such that the element $(\rho_{2k_0}, \sigma_{2k_0}) \in V_{set}$, then the invariant measure of $R$ is a sum of geometric terms. Moreover, the invariant measure is induced by $\{(\rho_1, \sigma_1),  \dots, (\rho_{2k_0}, \sigma_{2k_0})\} \subset \Gamma^{V}_1$. We repeat this procedure for all $\Gamma^V_b$ where $b \leq 3$.

\item Consider $\Gamma^{V}_1$. If there exists $k_0 \in \mathbb{N}_0$ such that the element $(\rho_{2k_0 + 1}, \sigma_{2k_0 + 1}) \in H_{set}$, then the invariant measure of $R$ is a sum of geometric terms. Moreover, the invariant measure is induced by $\{(\rho_1, \sigma_1),  \dots, (\rho_{2k_0 + 1}, \sigma_{2k_0 + 1})\} \subset \Gamma^{V}_1$. We repeat this procedure for all $\Gamma^V_b$ where $b \leq 3$.
\end{enumerate}
{\bf Step 5}:
If none of the conditions in Step $4$ holds, then the invariant measure of $R$ is not a sum of geometric terms.\\
\noindent\rule{11.7cm}{0.4pt}

We now apply the Detection Algorithm to two examples.

\begin{example}{\label{ex:R1D}}
We have $p_{1,0} = 0.05$, $p_{-1,1} = 0.15$, $p_{0,-1} = 0.15$, $p_{0,0} = 0.65$, $h_1 = 0.15$, $h_0 = 0.7$, $v_1 = 0.0929$, $v_{-1} = 0.15$, $v_{0} = 0.7071$. The other transition probabilities are zero.
\end{example}

We see from Figure~\ref{fig:ALGEXAD} that there exists a pairwise-coupled set with $3$ geometric terms satisfying the criterion in the Detection Algorithm. Hence, the invariant measure of Example~\ref{ex:R1D} is a sum of $3$ geometric terms.


\begin{figure}
\begin{center}
\subfigure[]{\includegraphics[scale=1]{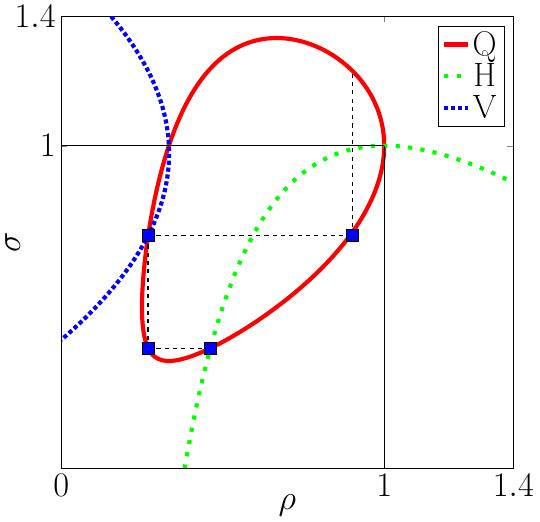}{\label{fig:4aD}}} \:\:\:\:\:\:
\subfigure[]{\includegraphics[scale=1]{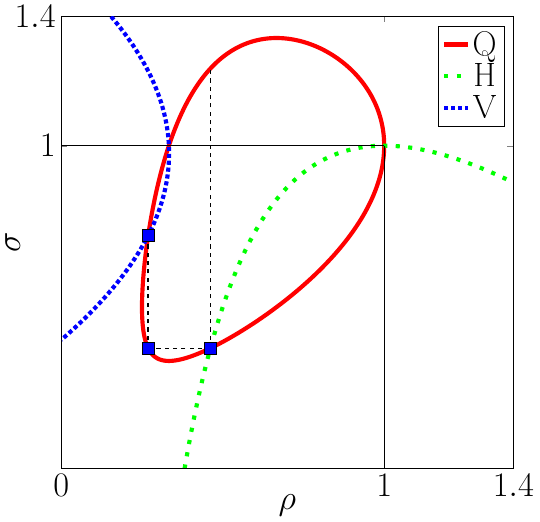}{\label{fig:4bD}}}
\end{center}
\caption{Apply the Detection Algorithm to Example~\ref{ex:R1D}. The geometric terms are denoted by the squares.~\subref{fig:4aD} $\Gamma^{H}_1$.~\subref{fig:4bD} $\Gamma^{V}_1$. \label{fig:ALGEXAD}}
\end{figure}

\begin{example}{\label{ex:R2D}}
We have $p_{1,0} = 0.05$, $p_{0,1} = 0.05$, $p_{-1,1} = 0.2$, $p_{-1,0} = 0.2$, $p_{0,-1} = 0.2$, $p_{1,-1} = 0.2$, $p_{0,0} = 0.1$, $h_1 = 0.5$, $h_{-1} = 0.1$, $h_0 = 0.15$, $v_1 = 0.1$, $v_{-1} = 0.06$, $v_0 = 0.59$. The other transition probabilities are zero.
\end{example}


\begin{figure}
\begin{center}
\subfigure[]{\includegraphics[scale=1]{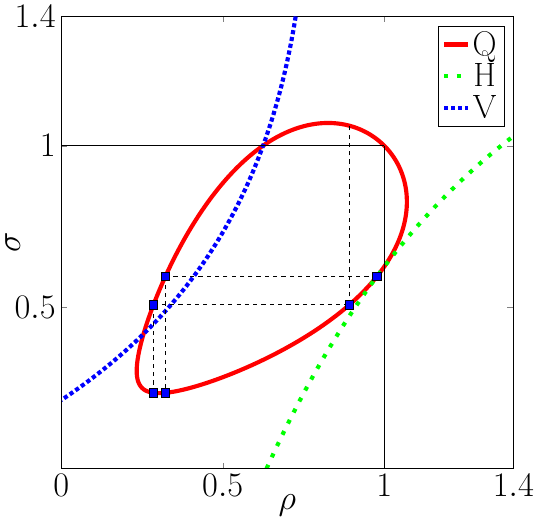}{\label{fig:EB7D}}} \:\:\:\:\:\:
\subfigure[]{\includegraphics[scale=1]{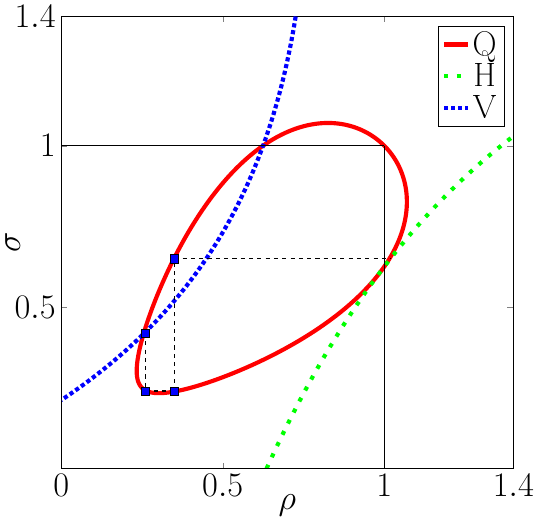}{\label{fig:EBVD}}} 
\end{center}
\caption{Apply the Detection Algorithm to Example~\ref{ex:R2D}. The geometric terms are denoted by the squares.~\subref{fig:EB7D} $\Gamma^{H}_1$.~\subref{fig:EBVD} $\Gamma^{V}_1$. \label{fig:ALGEXBextraextraD}}
\end{figure}

In Figures~\ref{fig:EB7D} and~\ref{fig:EBVD}, we construct the pairwise-coupled sets, $\Gamma^H_1$ and $\Gamma^V_1$, starting from the elements in $H_{set} = \{(\rho_h, \sigma_h)\}$ and $V_{set} = \{(\rho_v, \sigma_v)\}$, respectively, until they go outside of the unit square. Notice that $\Gamma^H_1 \cap (H_{set} \cup V_{set} \backslash (\rho_h, \sigma_h)) = \emptyset$ and $\Gamma^V_1 \cap (H_{set} \cup V_{set} \backslash (\rho_v, \sigma_v)) = \emptyset$. Hence, the invariant measure of Example~\ref{ex:R2D} cannot be a sum of geometric terms.

\subsection{Running time of the Detection Algorithm}

In this section, we show that the Detection Algorithm has a finite running time. More precisely, we provide an upper bound on the number of terms in the pairwise-coupled sets that are constructed in Steps $1$ and $2$ of the Detection Algorithm. In particular, we show that this construction provides a geometric term outside the unit square in a finite number of steps.

We first introduce the notion of branch points of $Q$. A point $(x_0, y_0) \in Q$ that satisfies $\Delta_y(x_0) = 0$, where
\begin{align}
\Delta_y(x) = \left(\sum_{s = -1}^{1} x^{-s + 1} p_{s,0} - x\right)^2 - 4\left(\sum_{s = -1}^{1} x^{-s + 1} p_{s,-1}\right)\left(\sum_{s = -1}^{1} x^{-s + 1} p_{s,1}\right), {\label{eq:YdeltaD}}
\end{align}
is called a horizontal branch point of $Q$. Since the algebraic curve $Q$ has a unique connected component in $[0,\infty)^2$ (see Lemma~7,~\cite{chen2013necessary}), it has two horizontal branch points in $[0,\infty)^2$, denoted by $(x_b, y_b)$, $(x_t, y_t)$ with $y_t \geq y_b$. In Appendix~\ref{app:proofD} we provide more details on the branch points of $Q$ as well as a proof of the next result.
\begin{theorem}{\label{thm:maximalstepsD}}
Consider a random walk $R$. For any pairwise-coupled set $\Gamma \subset Q$, if $|\Gamma| > M(R)$ with $M(R) = \frac{6}{\min(D_1, D_2)} + 4$, $D_1 = \frac{\Delta_y(x_b)}{\sum_{s = -1} ^{1} p_{s,-1} x_t^{1 - s}}$, $D_2 = \frac{\Delta_y(x_t)}{\sum_{s = -1} ^{1} p_{s,-1} x_t^{1 - s}}$, then there exists a $(\rho, \sigma) \in \Gamma$ such that $\rho > 1$ or $\sigma > 1$. Moreover, when $p_{1,0} + p_{1,1} + p_{0,1} \neq 0$, we have $D_1 > 0$ and $D_2 > 0$. Hence, $M(R) < \infty$.
\end{theorem}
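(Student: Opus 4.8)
The plan is to track the vertical progress made along the curve $Q$ as the pairwise-coupled set $\Gamma$ is built up. Recall that a pairwise-coupled set, when drawn on $Q$, consists of an alternating sequence of ``horizontal'' and ``vertical'' steps: from $(\rho_{2k-1},\sigma_{2k-1})$ to $(\rho_{2k},\sigma_{2k})$ one keeps $\sigma$ fixed and changes $\rho$ (the two roots of $Q(\cdot,\sigma)=0$ for fixed $\sigma$), and from $(\rho_{2k},\sigma_{2k})$ to $(\rho_{2k+1},\sigma_{2k+1})$ one keeps $\rho$ fixed and changes $\sigma$ (the two roots of $Q(\rho,\cdot)=0$ for fixed $\rho$). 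Since $Q$ is a quadratic in $y$ for each fixed $x$ (and similarly quadratic in $x$ for each fixed $y$), each such step is well-defined and lands on the unique connected component of $Q$ in $[0,\infty)^2$ described in Lemma~7 of~\cite{chen2013necessary}. The first step would be to make precise, using the branch points $(x_b,y_b)$ and $(x_t,y_t)$, that this component is a closed curve whose $x$-coordinates range over $[x_b,x_t]$ (or $[x_t,x_b]$) and whose two $y$-branches meet exactly at the branch points, where $\Delta_y(x)=0$.

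Next I would quantify the size of a vertical step. For a fixed $\rho\in(x_b,x_t)$, the two values $\sigma,\sigma'$ with $Q(\rho,\sigma)=Q(\rho,\sigma')=0$ are the two roots of a quadratic $A(\rho)y^2 + B(\rho)y + C(\rho)=0$ with $A(\rho)=\sum_s \rho^{1-s}p_{s,1}$, and the gap between them is $|\sigma-\sigma'| = \sqrt{\Delta_y(\rho)}/A(\rho)$. The key estimate is a lower bound on this gap: since $\Delta_y$ is (up to the positive factor coming from $A$) a concave-type function on $[x_b,x_t]$ vanishing at the endpoints, one can bound $\Delta_y(\rho)$ below on a suitable sub-interval, and one bounds $A(\rho)=\sum_s \rho^{1-s}p_{s,1}$ above by its value at $x_t$, namely $\sum_s p_{s,1}x_t^{1-s}$; this is where the quantities $D_1=\Delta_y(x_b)/\sum_s p_{s,-1}x_t^{1-s}$ and $D_2=\Delta_y(x_t)/\sum_s p_{s,-1}x_t^{1-s}$ enter, with the normalization chosen so that $\min(D_1,D_2)$ is a genuine lower bound on the vertical displacement achieved over a block of steps. (One also has to handle horizontal steps symmetrically, or argue that it suffices to control vertical progress.) Since each pair of consecutive steps moves the running point ``around'' the curve and increases the accumulated vertical extent by at least $\min(D_1,D_2)$, after at most $6/\min(D_1,D_2)$ such moves the point must have traversed the full height of the component (which is at most, say, bounded by a constant like $6$ in the relevant normalization), forcing the next term to have $\rho>1$ or $\sigma>1$; the additive ``$+4$'' absorbs boundary bookkeeping (the first few steps before the pattern stabilizes and the possibility of starting near a branch point).

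Finally, for the positivity claim, I would show $\Delta_y(x_b)>0$ and $\Delta_y(x_t)>0$ when $p_{1,0}+p_{1,1}+p_{0,1}\neq 0$. The point is that $\Delta_y(x)=0$ at $x=x_b$ would näively suggest $D_1=0$; the resolution is that $\Delta_y(x_b)$ in the statement is really a rescaled/renormalized discriminant (the discriminant of the relevant quartic in the fixed variable rather than the quadratic evaluated at the branch abscissa), and under the stated non-degeneracy assumption the curve $Q$ genuinely leaves the degenerate configuration, so this renormalized quantity is strictly positive; I would verify this by the same argument used in~\cite{chen2013necessary} to establish that $Q$ has exactly one connected component in $[0,\infty)^2$ with two distinct branch points. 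I expect the main obstacle to be the bookkeeping that turns ``each alternating pair of steps gains at least $\min(D_1,D_2)$ in vertical extent'' into a clean universal bound with the specific constant $6/\min(D_1,D_2)+4$: one must rule out the degenerate possibility that successive steps oscillate between the same two branch-adjacent points without net progress, which is where the structure of the pairwise-coupled partition (Definition of pairwise-coupled set and the uncoupling conditions) — guaranteeing that the $\rho$'s and $\sigma$'s in the alternating sequence are genuinely distinct and hence move monotonically along an arc — must be invoked.
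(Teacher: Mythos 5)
Your overall strategy matches the paper's: view the pairwise-coupled set as a staircase on the curve $Q$, lower-bound the length of each vertical step by a quantity built from the discriminant $\Delta_y$, and conclude that after boundedly many steps the staircase must leave the unit square. However, the central quantitative step is not actually carried out, and this is a genuine gap. For fixed $x$, the two $y$-roots of $Q(x,\cdot)$ differ by $\sqrt{\Delta_y(x)}$ divided by the leading coefficient of $Q(x,\cdot)$ as a quadratic in $y$; that coefficient is $\sum_{s}x^{1-s}p_{s,-1}$ (the $t=-1$ terms), not $\sum_{s}x^{1-s}p_{s,1}$ as you wrote, and it is precisely the denominator in $D_1$ and $D_2$, so with your $A(\rho)$ the bound you would obtain does not match the stated constants. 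More importantly, you never prove that this gap is bounded below by $\min(D_1,D_2)$ over the relevant range of $x$: you correctly observe that $\Delta_y(x_b)=\Delta_y(x_t)=0$ by the very definition of the horizontal branch points, so that $D_1$ and $D_2$ as literally written vanish, and you then defer the whole difficulty to an unspecified ``renormalized discriminant.'' Since that lower bound is the entire content of the estimate $\sigma_{i,j+2}\leq\sigma_{i,j}-\min(D_1,D_2)$, the proof is incomplete at its key point. (The paper's own proof has the same soft spot: it asserts $\Delta_y(x)\geq\min(\Delta_y(x_b),\Delta_y(x_t))$, which is vacuous at branch points, and also drops the square root; so your instinct that something must be reinterpreted is sound, but neither you nor the paper supplies the reinterpretation.)

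A second, smaller omission concerns the combinatorial bookkeeping that produces $6/\min(D_1,D_2)+4$. The paper partitions $\Gamma$ into maximal monotone staircase pieces $\Gamma_i$, shows each piece meets the left and right lobes $Q_l$ and $Q_r$ in at most one point each (using the monotonicity of the four arcs $Q_{00},Q_{01},Q_{10},Q_{11}$ between branch points), applies the per-double-step progress bound only on the central strip $Q_c$, and eliminates the case of three or more pieces by the cycle argument of Lemma~\ref{lem:cycleD}, which forces a term onto $Q_{11}$ and hence outside the unit square. Your sketch gestures at ``ruling out oscillation'' via the pairwise-coupled structure, but this case analysis — which is what the additive $4$ and the factor $6$ actually account for — is not supplied.
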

Theorem~\ref{thm:maximalstepsD} guarantees that the Detection Algorithm stops in finite time, since the pairwise-coupled set will go outside of the unit square with a bounded number of steps.

\subsection{Construction of the invariant measure}
If a random walk has an invariant measure that is a sum of geometric terms, then the Detection Algorithm will provide the set $\Gamma$ that induces this measure. It remains to construct the weighting coefficients in the linear combination of geometric terms. We next explain how to find the coefficients in the induced measure if the invariant measure of the random walk is a sum of geometric terms.

We will use Lemma~6 from~\cite{chen2012invariant} to determine the coefficients in the induced measure. We find it convenient to repeat it here.
\begin{lemma}{\label{lem:6D}}
Consider the random walk $R$ and a measure $m$ induced by $\Gamma$. Then $m$ is the
invariant measure of $R$ if and only if $B^h(\rho,\sigma) = 0$ and $B^v(\rho,\sigma) = 0$ for all $(\rho,\sigma)\in\Gamma$, where
\begin{align}{\label{eq:HdecomposeD}}
B^h(\tilde\rho,\tilde\sigma) = \sum_{(\rho, \sigma)\in\Gamma: \rho=\tilde\rho}\alpha(\rho, \sigma)\left[\sum_{s=-1}^1 \big(\rho^{1-s} h_s+\rho^{1-s}\sigma p_{s,-1}\big) - \rho\right],  
\end{align}
\begin{align}{\label{eq:VdecomposeD}}
B^v(\tilde\rho,\tilde\sigma) &= \sum_{(\rho, \sigma)\in\Gamma: \sigma=\tilde\sigma}\alpha(\rho, \sigma)\left[\sum_{t=-1}^1 \big(\sigma^{1-t} v_t+\rho \sigma^{1-t} p_{-1,t}\big) - \sigma\right]. 
\end{align}
\end{lemma}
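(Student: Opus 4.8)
\emph{Proof proposal.} The plan is to substitute the series representation of $m$ from Definition~\ref{def:inducedD} into the balance equations \eqref{eq:interiorD}--\eqref{eq:verticalD} and, in each of the two nontrivial families, to collect the geometric terms sharing a common rate so that the requirements decouple exactly into $B^{h}$ and $B^{v}$. I use throughout --- as is implicit in the statement, and as holds for every $\Gamma$ used in this paper --- that $\Gamma\subset Q$, \ie $Q(\rho,\sigma)=0$ for all $(\rho,\sigma)\in\Gamma$; equivalently this may be added to the hypotheses. (The \emph{only if} direction is in fact fine without it: an invariant $m$ satisfies \eqref{eq:interiorD} for all $i,j>0$, which rewrites as $\sum_{(\rho,\sigma)\in\Gamma}\alpha(\rho,\sigma)Q(\rho,\sigma)\rho^{i-1}\sigma^{j-1}=0$, and since the two-dimensional geometric sequences $\rho^{m}\sigma^{n}$ attached to distinct $(\rho,\sigma)$ are linearly independent and $\alpha\ne 0$, this forces $Q(\rho,\sigma)=0$ on $\Gamma$; it is the \emph{if} direction that genuinely needs it.) Granting $\Gamma\subset Q$, each term $\rho^{i}\sigma^{j}$ with $(\rho,\sigma)\in\Gamma$ individually solves \eqref{eq:interiorD} --- this is precisely the meaning of $Q(\rho,\sigma)=0$, noted right after \eqref{eq:QfunctionD} --- so by linearity $m$ does too. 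Since the balance equation at the origin is implied by the others (Section~\ref{sec:modelD}), only the horizontal and vertical axes remain, and by the symmetry of the two coordinates it is enough to treat the horizontal axis.

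Fix $i>0$ and insert $m(k,l)=\sum_{(\rho,\sigma)\in\Gamma}\alpha(\rho,\sigma)\rho^{k}\sigma^{l}$ into \eqref{eq:horizontalD}. Moving everything to one side and using $\rho^{i-s}=\rho^{i-1}\rho^{1-s}$ and $\rho^{i}=\rho^{i-1}\rho$, the equation at $(i,0)$ becomes
\begin{equation*}
\sum_{(\rho,\sigma)\in\Gamma}\alpha(\rho,\sigma)\left[\sum_{s=-1}^{1}\bigl(\rho^{1-s}h_{s}+\rho^{1-s}\sigma\,p_{s,-1}\bigr)-\rho\right]\rho^{i-1}=0 .
\end{equation*}
Now group $\Gamma$ by first coordinate: set $P=\{\tilde\rho:(\tilde\rho,\sigma)\in\Gamma\text{ for some }\sigma\}$ and, for $\tilde\rho\in P$, let $b^{h}(\tilde\rho)$ be the sum of the bracketed factors over $\{(\rho,\sigma)\in\Gamma:\rho=\tilde\rho\}$; by \eqref{eq:HdecomposeD} this is exactly $B^{h}(\tilde\rho,\tilde\sigma)$, which indeed depends only on $\tilde\rho$. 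The displayed identity then reads $\sum_{\tilde\rho\in P}b^{h}(\tilde\rho)\,\tilde\rho^{i-1}=0$, so $m$ satisfies \eqref{eq:horizontalD} for every $i>0$ if and only if $\sum_{\tilde\rho\in P}b^{h}(\tilde\rho)\,\tilde\rho^{n}=0$ for all integers $n\ge 0$.

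The last step is the elementary fact that the geometric sequences $(\tilde\rho^{n})_{n\ge 0}$, $\tilde\rho\in P$, are linearly independent because the $\tilde\rho$ are pairwise distinct and positive: evaluating at $n=0,1,\dots,|P|-1$ gives a Vandermonde system with distinct nodes, whose only solution is zero. Hence the vanishing above forces $b^{h}(\tilde\rho)=0$ for every $\tilde\rho\in P$, \ie $B^{h}(\rho,\sigma)=0$ for all $(\rho,\sigma)\in\Gamma$; the converse is immediate on reversing the equivalences. Running the same argument on \eqref{eq:verticalD}, with $\sigma^{j-1}$ factored out and $\Gamma$ grouped by its second coordinate, yields in the same way the condition $B^{v}(\rho,\sigma)=0$ for all $(\rho,\sigma)\in\Gamma$ from \eqref{eq:VdecomposeD}. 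Combining the interior, horizontal, vertical and (redundant) origin equations --- and recalling that a nonnegative summable solution of all balance equations of the irreducible positive recurrent chain $R$ agrees, up to normalization, with its invariant measure --- gives the lemma. Nothing here is deep; the only points needing care are the standing assumption $\Gamma\subset Q$ and the index shift that exposes the common factor $\rho^{i-1}$, while the linear-independence step, though it is the logical heart of the \emph{only if} direction, is entirely standard.
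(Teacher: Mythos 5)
Your proof is correct. Note, however, that the paper itself offers no proof of this lemma: it is imported verbatim as Lemma~6 of~\cite{chen2012invariant} (``we find it convenient to repeat it here''), so there is nothing internal to compare against. Your argument is the standard one and the one used in that reference: substitute the induced measure into \eqref{eq:interiorD}--\eqref{eq:verticalD}, observe that the interior equation forces (and, given $\Gamma\subset Q$, is equivalent to) $Q(\rho,\sigma)=0$ termwise, factor $\rho^{i-1}$ (resp.\ $\sigma^{j-1}$) out of the axis equations, group by the shared coordinate, and invoke linear independence of distinct geometric sequences. You are also right to flag that $\Gamma\subset Q$ is an implicit standing hypothesis needed for the ``if'' direction. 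Two small caveats, neither fatal: your Vandermonde step assumes $P$ is finite, which holds here because $p_{1,0}+p_{1,1}+p_{0,1}\neq 0$ forces $|\Gamma|<\infty$, but would need the slightly stronger independence argument for infinite $\Gamma$; and the ``if'' direction additionally requires nonnegativity and summability of $m$ to conclude it is \emph{the} invariant (probability) measure --- an imprecision inherited from the lemma's statement rather than introduced by you, and one you acknowledge in your closing remark.
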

Lemma~\ref{lem:6D} states that every two coefficients of the geometric terms with the same horizontal or vertical coordinates must satisfy a linear relationship.

The construction of the coefficients is as follows. We first fix the weighting coefficient for one of the terms in $\Gamma$ to an arbitrary value. Since this term is coupled to other terms in $\Gamma$, we can now compute the weighting coefficient for these terms using Lemma~\ref{lem:6D}. Following the same reasoning, we obtain values for all coefficients. Finally, we rescale all coefficients in order to ensure $\sum_{i = 0}^{\infty} \sum_{j = 0}^{\infty} m(i,j) = 1$.
 
We provide an example for the case that 
\begin{equation*}
\Gamma = \{(\rho_1, \sigma_1), (\rho_2, \sigma_2), (\rho_3, \sigma_3), \dots\},
\end{equation*} 
where $\rho_1 \neq \rho_2, \sigma_1 = \sigma_2$, $\rho_2 = \rho_3$, $\sigma_2 \neq \sigma_3$, $\dots$. We first fix $\alpha(\rho_1, \sigma_1)=1$. Next, we compute $\alpha(\rho_k, \sigma_k)$ consecutively, for $k = 2,3,4,\dots$. The value of the even terms $\alpha(\rho_{2\ell},\sigma_{2\ell})$, where $\ell = 1,2,3,\dots$ is given by: 
\begin{equation*}
\alpha(\rho_{2\ell}, \sigma_{2\ell}) = - \frac{W_{2\ell - 1}}{W_{2\ell}}  \alpha(\rho_{2\ell-1},\sigma_{2\ell-1}),
\end{equation*}
where
\begin{equation*}
W_k = (1 - \frac{1}{\sigma_k}) v_1 + (1 - \sigma_k)v_{-1} + \sum_{t = -1}^{1} p_{1,t} - \rho_k\left(\sum_{t = -1}^{1} \sigma_k^{-t}p_{-1,t}\right).
\end{equation*}
The value of the odd terms $\alpha(\rho_{2\ell+1}, \sigma_{2\ell+1})$, where $\ell = 1,2,3,\dots$ is given by:
\begin{equation*}
\alpha(\rho_{2\ell+1}, \sigma_{2\ell+1}) = - \frac{T_{2\ell}}{T_{2\ell+1}}  \alpha(\rho_{2\ell},\sigma_{2\ell}), 
\end{equation*}
where
\begin{equation*}
T_k = (1 - \frac{1}{\rho_k}) h_1 + (1 - \rho_k)h_{-1} + \sum_{s = -1}^{1} p_{s,1} - \sigma_k\left(\sum_{s = -1}^{1} \rho_k^{-s}p_{s,-1}\right).
\end{equation*}
Finally, all coefficients are rescaled to obtain a probability measure.

\section{Approximation analysis}{\label{sec:LPapproximationD}}

In this section, we provide an approximation scheme to establish upper and lower bounds for performance measures of a random walk for which the invariant measure is unknown. This scheme is similar to that developed in~\cite{goseling2014linear} in the sense that a linear program is developed to approximate the performance measures. We will give an overview of this approach below. The main difference in the current work is that we enlarge the candidate set of perturbed random walks that can be used in the approximation scheme. In particular, our approximation is based on a perturbed random walk of which the invariant measure is a sum of geometric terms. In addition to presenting the scheme itself in Section~\ref{subsec:ASD}, we show how such a perturbed random walk can be constructed in Section~\ref{subsec:perturbRWD}.


\subsection{Approximation scheme}{\label{subsec:ASD}}

Recall from Section~\ref{sec:modelD} that our approximation analysis provides, for a given performance measure, upper and lower bounds for $\mathcal{F} = \sum_{(i,j) \in S} m(i,j) F(i,j).$ Moreover, we consider the case that $m$ is unknown. In particular, $m$ is not a sum of geometric terms. The upper and lower bound on $\mathcal{F}$ that our approximation provide are expressed in terms of the invariant measure $\bar m: S\to[0,\infty)$ of another random walk $\bar R$, which we will refer to as the perturbed random walk. The perturbed random walk that we will consider has $\bar{m} = \sum_{(\rho, \sigma) \in \Gamma} \alpha(\rho, \sigma) \rho^i \sigma^j$ for some $\Gamma$. The transition probabilities of $\bar{R}$ are denoted by $\bar{p}_{s,t}$ for $s,t \in \{-1,0,1\}$. Moreover, we use $q_{s,t}$ where $s,t \in \{-1,0,1\}$ to denote the difference between the transition probabilities in $R$ and the corresponding transition probabilities in $\bar{R}$. In Section~\ref{subsec:perturbRWD} we provide a construction of $\bar{R}$. In the remainder of this section we present our approximation scheme based on the assumption that $\bar{R}$ is already known.

We interpret $F$ as a reward function, where $F^t(i,j)$ is the one step reward if the random walk is in state $(i,j)$. We denote by $F^t(i,j)$ the expected cumulative reward at time $t$ if the random walk starts from state $(i,j)$ at time $0$, \ie

\begin{equation*}
F^t(i,j) =
\begin{cases}
0, \quad &\text{if } t = 0,\\
F(i,j) + \sum_{u,v \in \{-1,0,1\}} p_{u,v} F^{t-1} (i+u,j+v), \quad &\text{if } t > 0.
\end{cases}
\end{equation*}

The next result from~\cite{vandijk11inbook} provides bounds on the approximation errors for $\mathcal{F}$. 


\begin{theorem}[\cite{vandijk11inbook}]{\label{thm:boundsD}}
Let $\bar{F}:S \rightarrow [0,\infty)$ and $G:S \rightarrow [0,\infty)$ satisfy
\begin{equation}{\label{eq:conditionsD}}
|\bar{F}(i,j) - F(i,j) + \sum_{u,v \in \{-1,0,1\}} q_{u,v} (F^t(i+u, j+v) - F^t(i,j))| \leq G(i,j),
\end{equation}
for all $(i,j) \in S$ and $t \geq 0$. Then,
\begin{equation*}
\sum_{(i,j) \in S} [\bar{F}(i,j) - G(i,j)]\bar{m}(i,j) \leq \mathcal{F} \leq \sum_{(i,j) \in S} [\bar{F}(i,j) + G(i,j)]\bar{m}(i,j).
\end{equation*}
\end{theorem}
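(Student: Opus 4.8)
The plan is to realize $\mathcal{F} = \sum_{(i,j)\in S} m(i,j)F(i,j)$ as a limit of expected cumulative rewards and then compare the cumulative rewards of $R$ and $\bar R$ via a telescoping/coupling argument. Concretely, since $R$ is positive recurrent and $F$ grows at most linearly, the ergodic theorem for the Markov chain gives $\lim_{t\to\infty} F^t(i,j)/t = \mathcal{F}$ for every fixed starting state $(i,j)$, with a uniform-in-$(i,j)$ control coming from the linear structure of $F$ and the fact that the occupation measures converge to $m$. I would first record this fact (or cite it from~\cite{vandijk11inbook}) and also note the analogous statement $\lim_{t\to\infty}\bar F^{\,t}(i,j)/t = \bar{\mathcal F} := \sum_{(i,j)} \bar m(i,j)\bar F(i,j)$ does \emph{not} immediately give what we want, so instead I would work with the exact one-step decompositions.

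The core computation is the following identity. Write $P$ and $\bar P$ for the transition operators of $R$ and $\bar R$, so $q = P - \bar P$ has entries $q_{u,v}$ that are translation-invariant on each part of the state space. Define $g^t(i,j) = \bar F(i,j) - F(i,j) + \sum_{u,v} q_{u,v}\big(F^t(i+u,j+v) - F^t(i,j)\big)$; condition~\eqref{eq:conditionsD} says $|g^t|\le G$. Starting from $\bar F^{\,t} = \sum_{k=0}^{t-1}\bar P^k \bar F$ (the cumulative reward under $\bar R$) and $F^t = \sum_{k=0}^{t-1} P^k F$, I would show by induction on $t$ that
\begin{equation*}
\bar F^{\,t}(i,j) - F^t(i,j) = \sum_{k=0}^{t-1} \bar P^k \big(\bar F - F\big)(i,j) + \sum_{k=0}^{t-1}\bar P^k\big(\bar P - P\big) F^{\,t-1-k}(i,j),
\end{equation*}
and then recognize the right-hand side as $\sum_{k=0}^{t-1}\bar P^k\, g^{\,t-1-k}(i,j)$ after rearranging the inner telescoping sum $\sum_{u,v} q_{u,v} F^{t-1-k}(i+u,j+v)$ against $\sum_{u,v} q_{u,v} F^{t-1-k}(i,j)$ — the subtracted term is legitimate because $\sum_{u,v} q_{u,v} = 0$ at every state (both $R$ and $\bar R$ are stochastic), so adding it changes nothing while producing exactly the bracket in~\eqref{eq:conditionsD}. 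Hence $|\bar F^{\,t}(i,j) - F^t(i,j)| \le \sum_{k=0}^{t-1}\bar P^k G(i,j)$.

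Now I would integrate against $\bar m$, the invariant measure of $\bar R$: applying $\sum_{(i,j)}\bar m(i,j)(\cdot)$ and using $\bar m \bar P^k = \bar m$ gives
\begin{equation*}
\Big|\sum_{(i,j)}\bar m(i,j)\big(\bar F^{\,t}(i,j) - F^t(i,j)\big)\Big| \le t\sum_{(i,j)}\bar m(i,j) G(i,j).
\end{equation*}
On the other hand $\sum_{(i,j)}\bar m(i,j)\bar F^{\,t}(i,j) = t\,\bar{\mathcal F}$ exactly (again by invariance, since $\bar F^{\,t} = \sum_{k=0}^{t-1}\bar P^k\bar F$ and $\bar m \bar P^k\bar F = \bar m\bar F$). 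Dividing by $t$, letting $t\to\infty$, and using $F^t(i,j)/t \to \mathcal F$ together with dominated convergence with respect to $\bar m$ (justified by the linear growth of $F$, summability of $\bar m$ against linear functions since $\Gamma\subset(0,1)^2$, and the uniform bound $0\le F^t(i,j)/t \le \mathcal F + (\text{const})\cdot(1+i+j)$) yields $|\bar{\mathcal F} - \mathcal F| \le \sum_{(i,j)}\bar m(i,j)G(i,j)$, which is exactly the claimed two-sided inequality once we substitute $\bar{\mathcal F} = \sum\bar m\bar F$.

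\textbf{Main obstacle.} The delicate point is not the algebraic telescoping — that is routine once one notices the $\sum_{u,v}q_{u,v}=0$ trick — but the interchange of limit and summation over the infinite state space when passing from $F^t(i,j)/t$ to $\mathcal F$ under $\bar m$, and the prior claim that $F^t(i,j)/t \to \mathcal F$ uniformly enough. This needs the linear growth bound on $F$, positive recurrence of $R$ (so that $F^t$ itself grows linearly in $t$ with coefficients controlled by $1+i+j$), and the geometric decay of $\bar m$; I would either invoke the corresponding lemma from~\cite{vandijk11inbook} verbatim or spell out the dominated-convergence argument. Everything else is bookkeeping.
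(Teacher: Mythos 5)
This theorem is quoted from~\cite{vandijk11inbook} and the paper gives no proof of it, so there is nothing internal to compare against; your argument is the standard Markov-reward comparison from that reference (telescoping $\bar F^{\,t}-F^t=\sum_{k=0}^{t-1}\bar P^k\bigl[(\bar F-F)+(\bar P-P)F^{\,t-1-k}\bigr]$, bounding the bracket by $G$ via \eqref{eq:conditionsD}, integrating against $\bar m$ using $\bar m\bar P=\bar m$, and letting $t\to\infty$), and it is correct in substance. One point to fix: with your convention $q=P-\bar P$ the bracket equals $\bar F-F-\sum_{u,v}q_{u,v}(F^{\,t-1-k}(i+u,j+v)-F^{\,t-1-k}(i,j))$, which is \emph{not} $\pm g^{\,t-1-k}$ as you defined it; the identification works only with $q_{u,v}=\bar p_{u,v}-p_{u,v}$ (the convention actually used in the source, the paper's wording being ambiguous), so state that explicitly. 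You are also right that the only genuinely delicate step is the interchange of $t\to\infty$ with $\sum_{(i,j)}\bar m(i,j)$: Fatou gives the upper bound on $\mathcal F$ for free since $F^t\geq 0$, but the lower bound needs a domination/bias-term estimate of the kind supplied by the technical conditions in~\cite{vandijk11inbook}, which is appropriate to cite rather than reprove.
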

Based on Theorem~\ref{thm:boundsD}, we develop a linear program similar to that in~\cite{goseling2014linear} to approximate $\mathcal{F}$. This linear program provides a set of sufficient constraints for~\eqref{eq:conditionsD} and an objective function that optimizes the upper (or lower) bound on $\mathcal{F}$.  In the linear program, we consider $\bar{F}$ and $G$ as variables and $q_{u,v}$, $F^t$ and $\bar{m}$ as parameters. In~\cite{goseling2014linear}, the invariant measure of the perturbed random walk is only allowed to be of product-form. The difference with the current work is that in our approximation scheme the invariant measure of the perturbed random walk can also be a sum of geometric terms. This difference will only affect the objective function of the linear program.

The constraints developed in~\cite{goseling2014linear} are obtained from Theorem~\ref{thm:boundsD}. Without additional precautions the number of constraints that is obtained is not finite because the state space $S$ contains infinitely many states and the time horizon is also infinite. In order to have a finite linear program with finitely many constraints the variables and the parameters in the linear program are constrained to be component-wise linear functions, \ie similar to how $F(i,j)$ is defined in Section~\ref{sec:modelD}. The rationale behind this is that non-negativity constraints on (linear combinations of) such functions can be expressed in a finite number of constraints. As second step towards a finite linear program, $F^t(i+u, j+v) - F^t(i,j)$ where $u,v \in \{-1,0,1\}$ is uniformly bound over $t$. In this manner finitely many constraints in finitely many variables are obtained. We refer the reader to~\cite{goseling2014linear} for details. For the remainder of this paper we capture the required results from~\cite{goseling2014linear}  in the following theorem which captures sufficient conditions for~\eqref{eq:conditionsD} in terms of a polytope $\mathcal{P}$.
\begin{theorem}[\cite{goseling2014linear}]{\label{thm:boundsDD}} 
Let $\bar F$ and $G$ be component-wise linear functions. If $(\bar F, G)\in\mathcal{P}$ then
\begin{equation*}
 \sum_{(i,j)\in S}\left[\bar F(i,j) - G(i,j)\right]\bar m(i,j)\ \leq\ \mathcal{F}\ \leq\ \sum_{(i,j)\in S}\left[\bar F(i,j) + G(i,j)\right]\bar m(i,j).
\end{equation*}
Moreover, $\mathcal{P}$ can be represented with a finite number of constraints that are linear in the coefficients that define $\bar F$ and $G$.
\end{theorem}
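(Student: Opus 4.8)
The plan is to obtain $\mathcal{P}$ directly from the sufficient condition~\eqref{eq:conditionsD} of Theorem~\ref{thm:boundsD}, following the construction of~\cite{goseling2014linear}. The target object is a polytope living in the finitely many real coefficients that describe the component-wise linear functions $\bar F$ and $G$, and membership in $\mathcal{P}$ must force~\eqref{eq:conditionsD} to hold for every state $(i,j)\in S$ and every $t\ge0$. Two features of~\eqref{eq:conditionsD} obstruct a finite description: the quantities $F^t(i+u,j+v)-F^t(i,j)$ depend on the time horizon $t$, and the inequality is indexed by the infinitely many states of $S$. I would eliminate these one at a time, treating $q_{u,v}$, $F^t$ and $\bar m$ as parameters and only $\bar F$, $G$ as decision variables.

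First, the $t$-dependence. The key step is to bound the increments $D^t_{u,v}(i,j):=F^t(i+u,j+v)-F^t(i,j)$ uniformly in $t$ by component-wise linear functions: write $\underline D_{u,v}(i,j)\le D^t_{u,v}(i,j)\le\overline D_{u,v}(i,j)$ with $\underline D_{u,v},\overline D_{u,v}$ component-wise linear and independent of $t$. Such envelopes exist because $R$ is positive recurrent and $F$ has component-wise linear growth; this is exactly where the analysis of~\cite{goseling2014linear} does the work. The left-hand side of~\eqref{eq:conditionsD} is affine in the tuple $(D^t_{u,v}(i,j))_{u,v}$, so demanding the inequality for all admissible values of that tuple in the box $\prod_{u,v}[\underline D_{u,v}(i,j),\overline D_{u,v}(i,j)]$ is a sound (if conservative) replacement for demanding it for all $t$, and because an affine function on a box is extremized at a vertex, it suffices to impose it at the finitely many vertices. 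Each resulting inequality has the form $\pm\bigl(\bar F(i,j)-F(i,j)+\sum_{u,v}q_{u,v}\Delta_{u,v}(i,j)\bigr)\le G(i,j)$ with $\Delta_{u,v}(i,j)\in\{\underline D_{u,v}(i,j),\overline D_{u,v}(i,j)\}$, hence is component-wise linear in $(i,j)$ with coefficients that are affine in the unknowns defining $\bar F$ and $G$.

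Second, the quantifier over $S$. Each inequality just produced asserts that a function that is affine on each of the four regions — interior, horizontal axis, vertical axis, origin — is non-negative on that region. On the interior $\{i\ge1,j\ge1\}$ an affine function $a+bi+cj$ is non-negative there iff $b\ge0$, $c\ge0$ and $a+b+c\ge0$; on a half-line such as $\{i\ge1,j=0\}$ the condition is $b\ge0$ and $a+b\ge0$; at the origin it is a single inequality. So every state-indexed inequality collapses to at most three linear inequalities in the coefficients of $\bar F$ and $G$. Taking the conjunction over the two sign choices, the finitely many vertex choices of the $\Delta_{u,v}$, and the four regions yields a finite linear system; defining $\mathcal{P}$ to be its solution set, $(\bar F,G)\in\mathcal{P}$ implies~\eqref{eq:conditionsD}, and Theorem~\ref{thm:boundsD} then delivers the asserted sandwich for $\mathcal{F}$, which proves the theorem.

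The main obstacle is the first step: producing the uniform-in-$t$, component-wise linear envelopes $\underline D_{u,v},\overline D_{u,v}$ for the reward increments. Their existence rests on positive recurrence of $R$ together with the linear growth of $F$, and the explicit construction is carried out in~\cite{goseling2014linear}, so here one need only invoke it. Everything after that — the vertex reduction and the regionwise non-negativity characterization — is routine bookkeeping. It is worth noting, finally, that the one way in which the present setting differs from~\cite{goseling2014linear}, namely that $\bar m=\sum_{(\rho,\sigma)\in\Gamma}\alpha(\rho,\sigma)\rho^i\sigma^j$ is a sum of geometric terms rather than a single product-form measure, enters only the objective $\sum_{(i,j)\in S}[\bar F(i,j)\pm G(i,j)]\bar m(i,j)$ and not the polytope $\mathcal{P}$; hence the finiteness claim is unaffected by the extension.
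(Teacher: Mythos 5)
Your proposal is correct and follows essentially the same route as the paper, which itself only sketches the argument and defers the details to \cite{goseling2014linear}: restrict $\bar F$ and $G$ to component-wise linear functions so that the regionwise non-negativity constraints reduce to finitely many linear inequalities on their coefficients, and replace the $t$-dependent increments $F^t(i+u,j+v)-F^t(i,j)$ by uniform-in-$t$ envelopes, reducing to vertices of the resulting box. Your closing observation that the sum-of-geometric-terms measure $\bar m$ enters only the objective and not the polytope $\mathcal{P}$ is exactly the point the paper makes when importing this theorem.
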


The difference with~\cite{goseling2014linear} is that instead of only using a perturbed random walk of which the invariant measure is of product-form, we are also allowed to use a perturbed random walk of which the invariant measure is a sum of geometric terms as well. Hence, we are able to find the upper and lower bounds for $\mathcal{F}$ based on a richer set of perturbed random walks with closed-form invariant measures $\bar{m}$. The linear programs that provide $F_{up}$ and $F_{low}$ are
\begin{align*}
F_{up} &= \min \left\{\sum_{(i,j) \in S} [\bar{F}(i,j) + G(i,j)] \bar{m}(i,j)\ \middle|\ (\bar F, G)\in\mathcal{P} \right\}, \\
F_{low}&= \max \left\{\sum_{(i,j) \in S} [\bar{F}(i,j) - G(i,j)] \bar{m}(i,j)\ \middle|\ (\bar F, G)\in\mathcal{P} \right\}.
\end{align*}

Notice that the measure $\bar{m}$ is a parameter in the above linear programs. The transition rates in $\bar R$, moreover, affect the constraints in $\mathcal{P}$. We will demonstrate in Section~\ref{sec:examplesD} that the choice of $\bar R$ and $\bar m$ can significantly affect the bounds $F_{up}$ and $F_{low}$. The best upper and lower bounds might be achieved by using different perturbed random walks. In the next section, we explain how to find such a perturbed random walk $\bar{R}$.

\subsection{Perturbed random walk $\bar{R}$}{\label{subsec:perturbRWD}}
In this section, we first show how to find a perturbed random walk $\bar{R}$ such that the invariant measure of $\bar{R}$ is of a given product-form, \ie $\bar{m}(i,j) = \rho^i \sigma^j$. Then, we show how to find the perturbed random walk $\bar{R}$ of which the invariant measure is a sum of geometric terms. We restrict our attention to the case where for the perturbed random walk $\bar{R}$, only the transitions along the boundaries of the state space $S$, which are denoted by $\bar{h}_1, \bar{h}_{-1}, \bar{h}_{0}$ and $\bar{v}_1, \bar{v}_{-1}, \bar{v}_0$, are different from that in the original random walk $R$, see Figure~\ref{fig:PrwD}. 

\begin{figure}
\hfill
\includegraphics{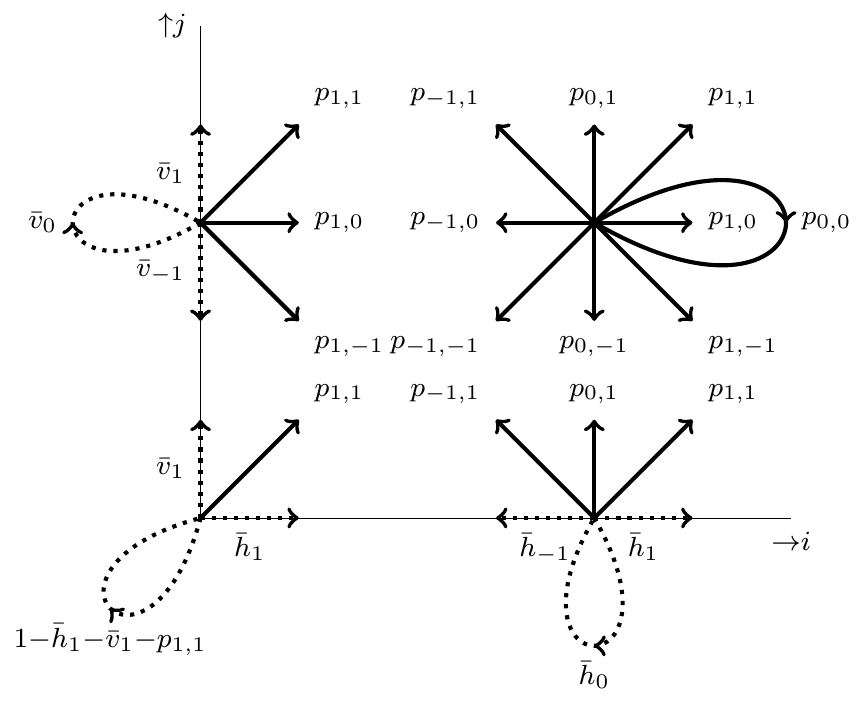}
\hfill{}
\caption{Perturbed random walk in the quarter-plane. \label{fig:PrwD}}
\end{figure}

The construction of $\bar{h}_1, \bar{h}_{-1}, \bar{h}_{0}$ and $\bar{v}_1, \bar{v}_{-1}, \bar{v}_0$ will take place in three phases. Before providing details we provide an overview of all phases. In the first phase we will let $\bar h_0= \bar v_0 = 0$ and find non-negative values of $\bar{h}_1$, $\bar{h}_{-1}$, $\bar{v}_1$, $\bar{v}_{-1}$ that ensure that all balance equations are satisfied. We allow these values to be larger than one and we do not require that outgoing transitions sum to one. In the second phase we scale the values of all transitions probabilities in $\bar R$ and the values of $\bar{h}_1$, $\bar{h}_{-1}$, $\bar{v}_1$, $\bar{v}_{-1}$ such that the resulting values are properly normalized transition probabilities. In the third phase we scale the transition probabilities of $R$ with the same constant.

Next, we define the scaling operation more precisely in terms of the notion of a $C$-rescaled random walk. Since the values of $\bar{h}_1$, $\bar{h}_{-1}$, $\bar{v}_1$, $\bar{v}_{-1}$ that are obtained in phase 1 are not necessarily transition probabilities we denote them by $H_1$, $H_{-1}$, $V_1$, $V_{-1}$ to avoid possible confusion. The resulting system might not be a random walk and will be denoted by $\mathcal{R}$.


%
\begin{definition}[$C$-rescaled random walk]{\label{def:rescaleD}}
Consider $\mathcal{R}$ with $p_{s,t}$ where $s,t \in \{-1,0,1\}$ in the interior and $H_1$, $H_{-1}$, $V_1$, $V_{-1}$ for the boundaries. Let $C > 1$. The random walk $\tilde{R}$ is called the $C$-rescaled random walk of $\mathcal{R}$ if the transition probabilities of $\tilde{R}$ are $\tilde{p}_{s,t} = \frac{p_{s,t}}{C}$ for $(s,t) \neq (0,0)$ and $\tilde{p}_{0,0} = 1 - \sum_{(s,t) \neq (0,0)} \tilde{p}_{s,t}$. Moreover, the boundary transition probabilities are $\tilde{h}_1 = \frac{H_1}{C}$, $\tilde{h}_{-1} = \frac{H_{-1}}{C}$, $\tilde{h}_{0} = 1 - \tilde{h}_1 - \tilde{h}_{-1} - \sum_{s = -1}^{1} \tilde{p}_{s,1}$,  $\tilde{v}_1 = \frac{V_1}{C}$, $\tilde{v}_{-1} = \frac{V_{-1}}{C}$ and $\tilde{v}_{0} = 1 - \tilde{v}_1 - \tilde{v}_{-1} - \sum_{t = -1}^{1} \tilde{p}_{1,t}$.
\end{definition}
Clearly, if $\bar m(i,j) = \rho^i \sigma^j$ satisfies all balance equations for all states in $\mathcal{R}$, then $\bar m(i,j)$ satisfies all balance equations for all states in a $C$-rescaled random walk of $\mathcal{R}$ as well. Therefore, scaling the original random walk $R$ with the same constant $C$, will not affect its invariant measure. Moreover, we can now apply the approximation scheme from the previous section by comparing the two rescaled random walks, since they will have the same transition probabilities in the interior of the state space.

If the given measure is of product-form, then we find the new boundary probabilities in the perturbed random walk such that the algebraic curve $H$ and $V$ will cross this point which leads to the product-form invariant measure. If the given measure is a sum of an odd number of geometric terms which form a pairwise-coupled set, then we find the new boundary probabilities in the perturbed random walk such that the algebraic curves $H$ and $V$ will cross two specific points from this pairwise-coupled set.

\subsubsection{$\bar{R}$ with a product-form invariant measure}

The first step is to construct the scalars $H_1, H_{-1}$ and $V_1, V_{-1}$ which can be greater than $1$. Notice that, if $\bar{m}(i,j) = \rho^i \sigma^j$ satisfies the balance equations for all states from $S$, the scalars $H_1, H_{-1}$ and $V_1, V_{-1}$ must satisfy the horizontal and vertical balance equations in~\eqref{eq:horizontalD} and~\eqref{eq:verticalD}, respectively. Inserting $m(i,j) = \rho^i \sigma^j$ into equation~\eqref{eq:horizontalD} gives
\begin{equation}{\label{eq:shorizontalD}}
(1 - \frac{1}{\rho}) H_1 + (1 - \rho) H_{-1} = \sum_{s = -1}^{1} \rho^{-s} \sigma p_{s,-1} - \sum_{s = -1}^{1} p_{s,1}.
\end{equation}
Equation~\eqref{eq:shorizontalD} is linear in the two unknowns $H_1, H_{-1}$. The non-negative $H_1, H_{-1}$ exist because the slope of equation~\eqref{eq:shorizontalD} is positive. Similarly, non-negative $V_1$ and $V_{-1}$ can also be found.

The next step is to start rescaling. Therefore, we determine $c_h$ and $c_v$ as follows,
\begin{equation*}
c_h = H_1 + H_{-1} + \sum_{s = -1}^{1}p_{s,1} \quad \text{and} \quad c_v = V_1 + V_{-1} + \sum_{t = -1}^{1}p_{1,t}.
\end{equation*}
We take $C = \max\{c_h, c_v\}$. 

If $C \leq 1$, then the random walk with $\bar{h}_1 = H_1, \bar{h}_{-1} = H_{-1}, \bar{v}_1 = V_1, \bar{v}_{-1} = V_{-1}$ as boundary transition probabilities is the perturbed random walk $\bar{R}$ with invariant measure $\bar{m}$. 

If $C > 1$, we do not consider the original random walk $R$ anymore. We consider the $C$-rescaled random walk of $R$, which is denoted by $\tilde{R}$, to be the input random walk for the approximation scheme. Moreover, the random walk $\bar{R}$ with $\bar{h}_1 = \frac{H_1}{C}, \bar{h}_{-1} = \frac{H_{-1}}{C}, \bar{v}_1 = \frac{V_1}{C}, \bar{v}_{-1} = \frac{V_{-1}}{C}$ as boundary transition probabilities is the perturbed random walk of $\tilde{R}$, instead of $R$.

\subsubsection{$\bar{R}$ with a sum of geometric terms as invariant measure}
Similarly, we find the perturbed random walk $\bar{R}$ of which the invariant measure is $\bar{m} = \sum_{(\rho, \sigma) \in \Gamma} \alpha(\rho, \sigma) \rho^i \sigma^j$ where $|\Gamma| = 2k+1$, with $k = 1,2,3,\dots$.

Using Theorem~\ref{thm:oddD}, we find $(\rho_1, \sigma_1)$ to be the geometric term from the set $\Gamma$ which does not share the horizontal coordinate with any other geometric terms from set $\Gamma$. Similarly, we find $(\rho_2, \sigma_2)$ to be the geometric term from the set $\Gamma$ which does not share the vertical coordinate with any other geometric terms from set $\Gamma$.

Instead of looking for the scalars $H_1$, $H_{-1}$, $V_1$ and $V_{-1}$ which satisfy the horizontal and vertical balance equation for the product-form $\bar{m}(i,j) = \rho^i \sigma^j$, we find $H_1$ and $H_{-1}$, which satisfy the horizontal balance for the geometric measure $\rho_1^i \sigma_1^j$, and $V_1$ and $V_{-1}$, which satisfy the vertical balance for the geometric measure $\rho_2^i \sigma_2^j$ (see Lemma~\ref{lem:6D}). Again, we compute $c_h$ and $c_v$. If $C\leq 1$, we find $\bar{R}$ directly. If $C > 1$, we consider the $C$-rescaled random walk of $R$, $\tilde{R}$, as the input random walk for our approximation scheme. Moreover, we find the perturbed random $\bar{R}$ for $\tilde{R}$ similar to the case when the invariant measure of $\bar{R}$ is of product-form.
\section{Numerical illustrations}{\label{sec:examplesD}}
In this section, we apply the Detection Algorithm and the approximation scheme developed in Section~\ref{sec:LPapproximationD} to several random walks. For any given random walk, we provide the explicit form of the invariant measure if it is a sum of geometric terms. Otherwise, we provide error bounds for the performance measures. 

We show that the bounds for the performance measures will be improved when using a perturbed random walk of which the invariant measures is a sum of geometric terms instead of a perturbed random walk of which the invariant measure is of product-form as in~\cite{goseling2014linear}. 

In particular, we are interested in following performance measures of a random walk in the quarter-plane,\\
$\mathcal{F}_1$: the average number of jobs in the first dimension,\\
$\mathcal{F}_2$: the probability that the system is empty.\\
Notice that the function $F(i,j)$ used to determine $\mathcal{F}$ is component-wise linear. It can the readily verified that the performance measure $\mathcal{F}$ is $\mathcal{F}_1$ if and only if we assign the following values to the coefficients: $f_{1,1} = 1$ and $f_{4,1} = 1$ and others $0$. Similarly, the performance measure $\mathcal{F}$ is $\mathcal{F}_2$ if and only if we assign the following values to the coefficients: $f_{3,0} = 1$ and others $0$. 

We use $[\mathcal{F}^1_1]_{up/low}$, $[\mathcal{F}^1_2]_{up/low}$ to denote the upper and lower bounds for the performance measures obtained via a perturbed random walk of which the invariant measure is of product-form. We use $[\mathcal{F}^3_1]_{up/low}$, $[\mathcal{F}^3_2]_{up/low}$ to denote the upper and lower bounds for the performance measures obtained via a perturbed random walk of which the invariant measure is a sum of $3$ geometric terms.

The next example we consider here has also been mentioned in~\cite{chen2012invariant}. 
\begin{example}{\label{ex:oneD}}
We have $p_{1,0} = 0.05$, $p_{0,1} = 0.05$, $p_{-1,1} = 0.2$, $p_{-1,0} = 0.2$, $p_{0,-1} = 0.2$, $p_{1,-1} = 0.2$, $p_{0,0} = 0.1$ and $h_1=0.5$, $h_{-1} = 0.1$, $h_0 = 0.15$, $v_1 = 0.113$, $v_{-1}=0.06$, $v_0 = 0.577$. The other transition probabilities are zero. 
\end{example}
In Figure~\ref{fig:crbq1D} all non-zero transition probabilities, except those for the transitions from a state to itself, are illustrated.

\begin{figure}
\begin{center}
\subfigure[]{\includegraphics[scale=1]{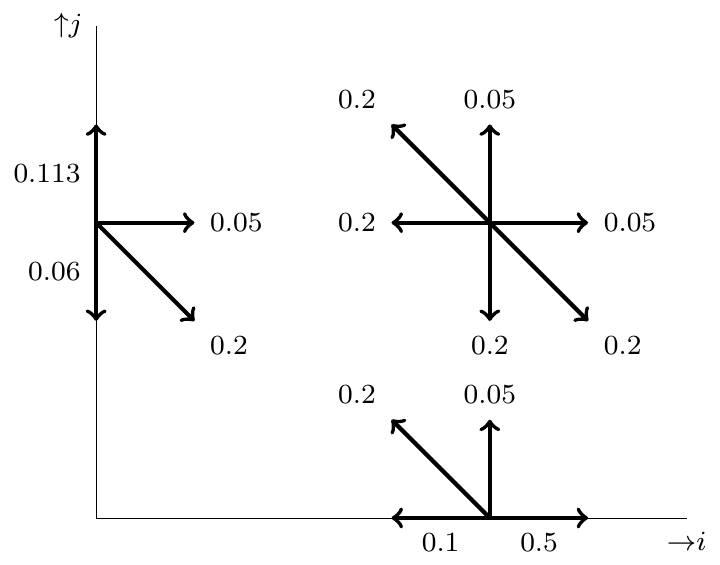}{\label{fig:crbq1D}}}
 \\
\subfigure[]{\includegraphics[scale=1]{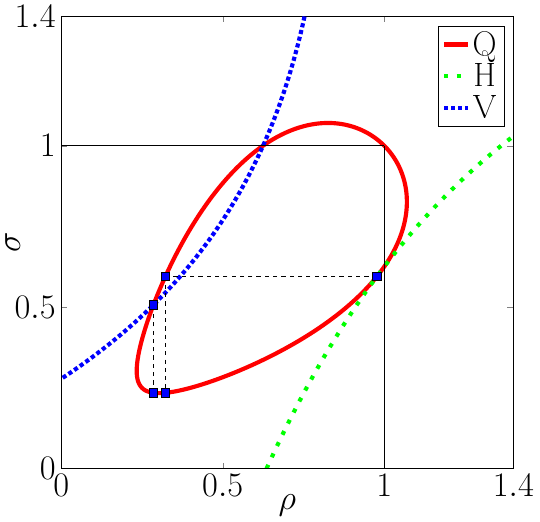}{\label{fig:crbq2D}}}
\end{center}
\caption{Example~\ref{ex:oneD}.~\subref{fig:crbq1D} Transition diagram.~\subref{fig:crbq2D} Algebraic curves $Q$, $H$ and $V$. The geometric terms contributed to the invariant measure are denoted by the squares.\label{fig:crbqD}}
\end{figure}

Using the Detection Algorithm, we find the invariant measure of this random walk is 
\begin{equation*}
m(i,j) = \sum_{k=1}^5 \alpha_k\rho_k^i\sigma_k^j, 
\end{equation*}
where the geometric terms are the solid squares in Figure~\ref{fig:crbq2D} and the coefficients are $\alpha_1 = 0.0088$, $\alpha_2=0.1180$, $\alpha_3=-0.1557$, $\alpha_4 = 0.1718$, $\alpha_5 = -0.1414$. Therefore, we are able to compute the performance measures directly from $m(i,j)$,
\begin{align*}
\mathcal{F}_1 &= \sum_{k = 1}^5 \sum_{i = 0}^{\infty} \sum_{j = 0}^{\infty} \alpha_k i \rho_k^i \sigma_k^j = \sum_{k = 1}^{5} \alpha_k \frac{\rho_k}{(1 - \rho_k)^2} \frac{1}{1 - \sigma_k} = 41.2062. \\
\mathcal{F}_2 &= \sum_{k = 1}^{5} \alpha_k = 0.0015.
\end{align*}

\begin{example}{\label{ex:twoD}}
We have $p_{1,0} = 0.1$, $p_{0,1} = 0.1$, $p_{-1,1} = 0.1$, $p_{-1,0} = 0.3$, $p_{0,-1} = 0.3$, $p_{1,-1} = 0.1$ and $h_1=0.1$, $h_{-1} = 0.02$, $h_0 = 0.68$, $v_1 = 0.1$, $v_{-1}=0.03$, $v_0 = 0.67$. The other transition probabilities are zero. 
\end{example}
In Figure~\ref{fig:Ex1aD}, all non-zero transition probabilities, except those for the transitions from a state to itself, are illustrated.

\begin{figure}
\begin{center}
\subfigure[]{\includegraphics[scale=1]{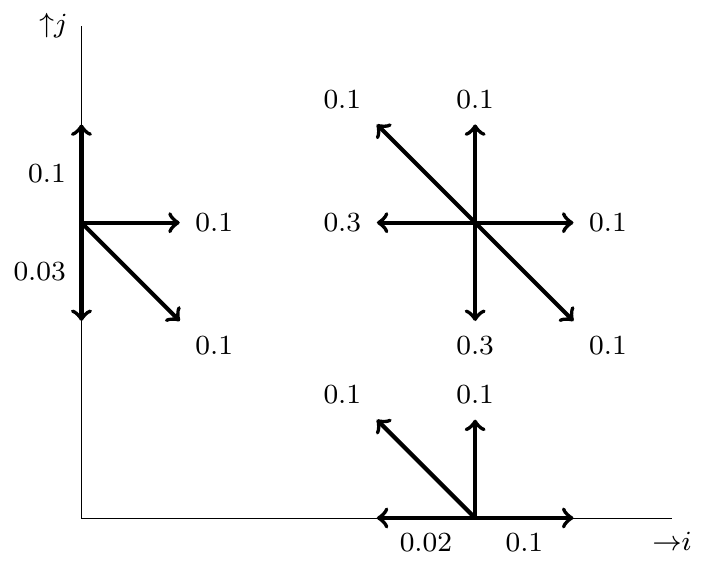}{\label{fig:Ex1aD}}}
 \\
\subfigure[]{\includegraphics[scale=1]{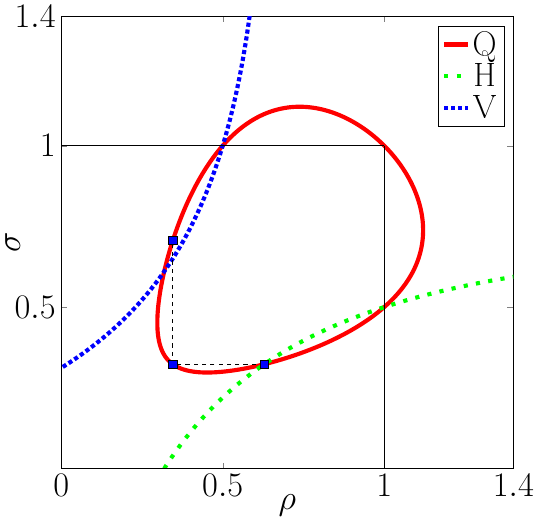}{\label{fig:Ex1bD}}}
\end{center}
\caption{Example~\ref{ex:twoD}.~\subref{fig:Ex1aD} Transition diagram.~\subref{fig:Ex1bD} Algebraic curves $Q$, $H$ and $V$. The geometric terms are denoted by the squares.\label{fig:Ex1D}}
\end{figure}


\begin{figure}
\begin{center}
\subfigure[]{\includegraphics[scale=1]{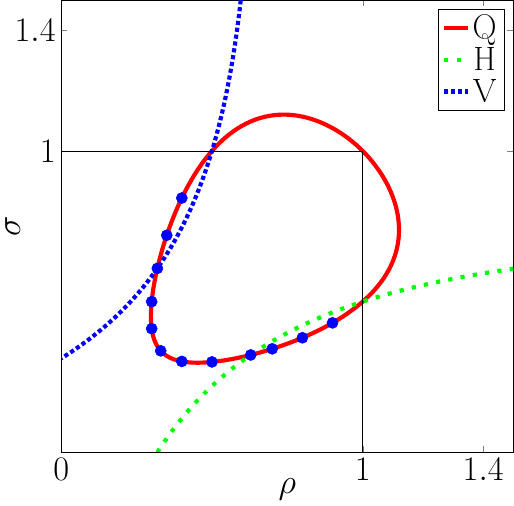}{\label{fig:ebEx1aD}}}
 \\
\subfigure[]{\includegraphics[scale=1]{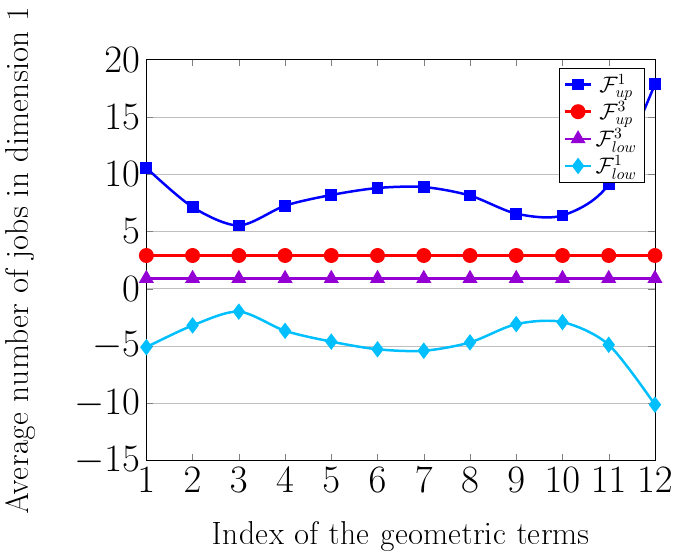}{\label{fig:ebEx1bD}}}
\end{center}
\caption{~\subref{fig:ebEx1aD} The geometric measures from $Q$.~\subref{fig:ebEx1bD} Error bounds for $\mathcal{F}_1$. The $x$-axis in~\subref{fig:ebEx1bD} are the index of $12$ geometric measures in~\subref{fig:ebEx1aD} sorted from left up corner to the right down corner.{\label{fig:ebEx1D}}}
\end{figure}

Using the Detection Algorithm, we conclude that the invariant measure cannot be a sum of geometric terms. Instead, we will find error bounds for $\mathcal{F}_1$. We first obtain error bounds for $\mathcal{F}_1$ using a perturbed random walk of which the invariant measure is of product-form. Figure~\ref{fig:ebEx1aD} shows $12$ different geometric terms which are the invariant measures of the perturbed random walks used to bound $\mathcal{F}_1$. Moreover, we bound $\mathcal{F}_1$ based on a perturbed random walk of which the invariant measure is the sum of the $3$ geometric terms that are depicted as solid squares in Figure~\ref{fig:Ex1bD}. Finally, we find error bounds for $\mathcal{F}_1$ in Figure~\ref{fig:ebEx1bD}. Figure~\ref{fig:ebEx1bD} shows that the minimum of $[\mathcal{F}^1_1]_{up}$ and the maximum of $[\mathcal{F}^1_1]_{low}$, when perturbed random walks of which the invariant measures are of product-form are used, are
\begin{equation*}
\min([\mathcal{F}^1_1]_{up}) = 5.4200, \quad \max([\mathcal{F}^1_1]_{low}) = -1.9527.
\end{equation*}
Note that lower bounds which provide negative values are not directly useful, since $\mathcal{F}$ can always be lower bounded by $0$. However, these bounds indicate the range of errors that our approximation scheme may lead to. The upper and lower bounds for $\mathcal{F}_1$, when the perturbed random walk of which the invariant measure is a sum of $3$ geometric terms which are depicted in Figure~\ref{fig:Ex1bD} is used, are 
\begin{equation*}
[\mathcal{F}^3_1]_{up} = 2.9026, \quad [\mathcal{F}^3_1]_{low} = 0.8964.
\end{equation*}

Clearly, $[\mathcal{F}^3_1]_{up}$ and $[\mathcal{F}^3_1]_{low}$ outperform $[\mathcal{F}^1_1]_{up}$ and $[\mathcal{F}^1_1]_{low}$. From the results above, we conclude that using a perturbed random walk of which the invariant measure is a sum of multiple geometric geometric terms improves the error bounds compared with only using the perturbed random walk of which the invariant measure is of product-form.

In the next example, we consider a discrete-time Markov chain obtained by uniformizing a tandem queue model. The special feature of this model is that, when the first server is idle, the service rate in the second server will be slower.

\begin{example}[Tandem queue with server slow-down]{\label{ex:threeD}}
We have $p_{1,0} = 0.1$, $p_{-1,1} = 0.2$, $p_{0,-1} = 0.3$, $p_{0,0} = 0.4$, $h_1 = 0.1$, $h_{0} = 0.7$, $v_{-1} = 0.03$ and $v_{0} = 0.87$. The other transition probabilities are zero.
\end{example}
In Figure~\ref{fig:tdAD}, all non-zero transition probabilities, except those for the transitions from a state to itself, are illustrated.

\begin{figure}
\begin{center}
\subfigure[]{\includegraphics[scale=1]{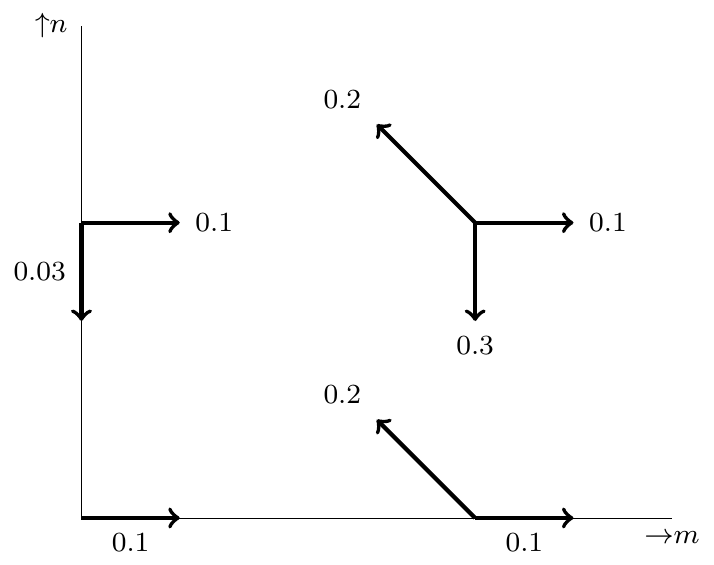}{\label{fig:tdAD}}}
 \\
\subfigure[]{\includegraphics[scale=1]{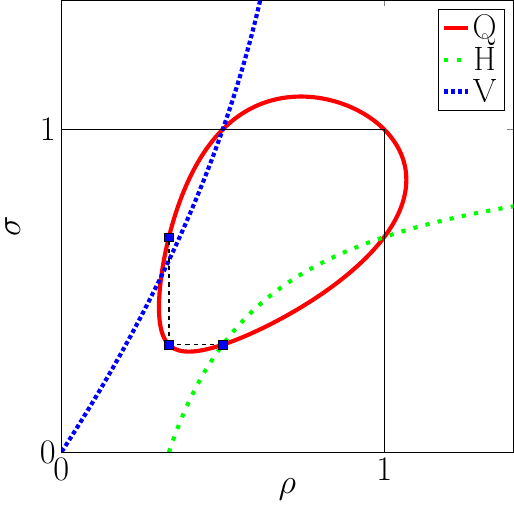}{\label{fig:tdBD}}}
\end{center}
\caption{Example~\ref{ex:threeD}.~\subref{fig:tdAD} Transition diagram.~\subref{fig:tdBD} Algebraic curves $Q$, $H$ and $V$. The geometric terms are denoted by the squares.\label{fig:tdD}}
\end{figure}

Using the Detection Algorithm, we conclude that the invariant measure cannot be a sum of geometric terms. Instead, we find error bounds for $\mathcal{F}_2$. We first obtain error bounds for $\mathcal{F}_2$ using a perturbed random walk of which the invariant measure is of product-form. Figure~\ref{fig:ebEx3aD} shows $12$ different geometric terms which are the invariant measures of the perturbed random walks used to bound $\mathcal{F}_2$. We also bound $\mathcal{F}_2$ based on a perturbed random walk of which the invariant measure is the sum of $3$ geometric terms which are depicted as solid squares in Figure~\ref{fig:tdBD}. Finally, Figure~\ref{fig:ebEx3D} shows bounds for $\mathcal{F}_2$.


\begin{figure}
\begin{center}
\subfigure[]{\includegraphics[scale=1]{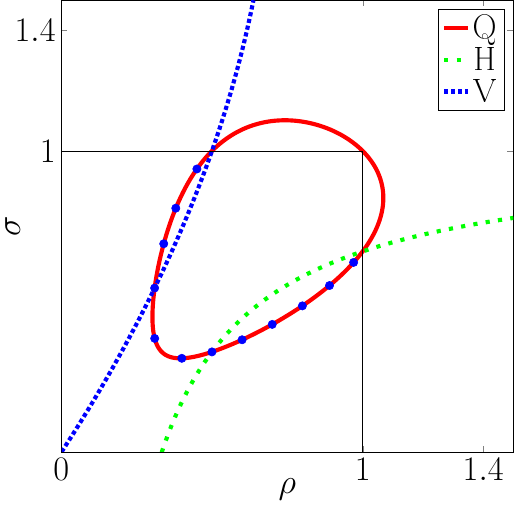}{\label{fig:ebEx3aD}}}
 \\
\subfigure[]{\includegraphics[scale=1]{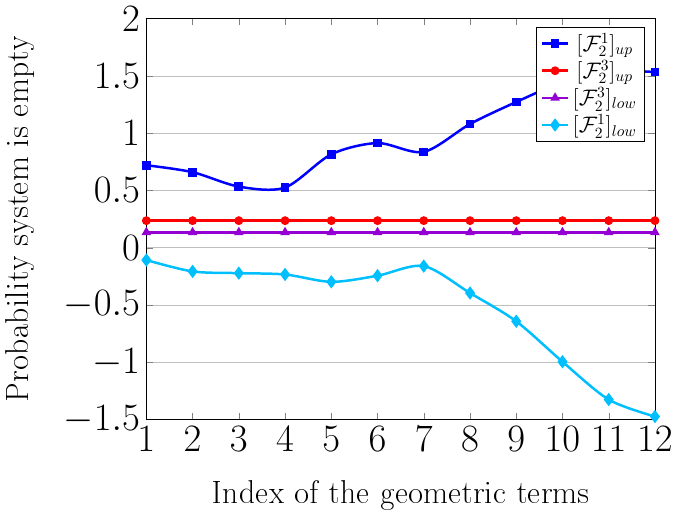}{\label{fig:ebEx3bD}}}
\end{center}
\caption{~\subref{fig:ebEx3aD} The geometric measures from $Q$.~\subref{fig:ebEx3bD} Error bounds for $\mathcal{F}_2$. The $x$-axis in~\subref{fig:ebEx3bD} are the $12$ geometric terms in~\subref{fig:ebEx3aD} sorted from left up corner to the right down corner.\label{fig:ebEx3D}}
\end{figure}

We obtain the error bounds for $\mathcal{F}_2$ in Figure~\ref{fig:ebEx3D}. From Figure~\ref{fig:ebEx3bD}, we have
\begin{equation*}
\min([\mathcal{F}^1_2]_{up}) = 0.5258, \quad \max([\mathcal{F}^1_2]_{low}) = -0.1070.
\end{equation*}
Using the perturbed random walk of which the invariant measure is the sum of $3$ geometric terms depicted as solid squares (see Figure~\ref{fig:tdBD}), we obtain the upper and lower bounds for $\mathcal{F}_2$: 
\begin{equation*}
[\mathcal{F}^3_2]_{up} = 0.2367, \quad [\mathcal{F}^3_2]_{low} = 0.1346,
\end{equation*}
respectively.

Clearly, $[\mathcal{F}^3_2]_{up}$ and $[\mathcal{F}^3_2]_{low}$ outperform $[\mathcal{F}^1_2]_{up}$ and $[\mathcal{F}^1_2]_{low}$.

Note that there is no monotonicity between the number of geometric terms used in the invariant measure of the perturbed random walk and the error bounds of the approximated performance measure. For example, if we use the perturbed random walk of which the invariant measure is induced by $5$ geometric terms, the error bounds for the approximated performance measure is not necessarily better than that obtained via the perturbed random walk of which the invariant measure is induced by $3$ geometric terms. This is the reason why we do not give many numerical illustrations when the invariant measure of the perturbed random walk is induced by $\Gamma$ where $|\Gamma| = 5,7,9, \dots$. 

Finally, note that in out numerical illustrations, perturbations to invariant measures with $3$ geometric terms are better than perturbations to product-form invariant measures. However, there are also examples when perturbations to product-form invariant measures provide the best results.

\section{Conclusion and discussion}{\label{sec:discussionD}}

In this paper, we developed an algorithm to check whether the invariant measure of a given random walk in the quarter-plane is a sum of geometric terms. We also showed how to find the invariant measure explicitly, if the answer is positive. Random walks with such performance measures can be readily evaluated. For the case that the invariant measure of a given random walk is not a sum of geometric terms, we developed an approximation scheme to determine the performance measures. These bounds are determined using a perturbed random walk which differs from the original random walk only in the transitions along the boundaries. We showed numerically that considering a perturbed random walk of which the invariant measure is a sum of geometric terms instead of a perturbed random walk of which the invariant measure is of product-form results in tighter bounds for the performance measures.

In this paper, we assume $p_{1,0} + p_{1,1} + p_{0,1} \neq 0$ because when $p_{1,0} + p_{1,1} + p_{0,1} = 0$, the algebraic curve $Q$ of the random walk has an accumulation point at the origin. In this case, the Detection Algorithm may not stop in finite time. However, this does not prevent us from using the approximation scheme developed in Section~\ref{subsec:ASD} to obtain bounds on the performance measures, \ie using a perturbed random walk of which the invariant measure is a sum of finitely many geometric terms. Therefore, we conclude that our approximation scheme accepts any random walk as an input.

\bibliographystyle{plain}
\bibliography{bibfile} 

\newpage

\appendix 

\section{Proof of Theorem~\ref{thm:oddD}}{\label{app:oddD}}

In order to prove Theorem~\ref{thm:oddD}, we first present a lemma. The vertical branch points which will be used here are defined similarly to the horizontal branch points before. Since the algebraic curve $Q$ has a unique connected component in $[0,\infty)^2$ (see Lemma~7,~\cite{chen2013necessary}), it has two vertical branch points in $[0,\infty)^2$, denoted by $(x_l, y_l)$, $(x_r, y_r)$ with $x_l \geq x_r$.

\begin{lemma}{\label{lem:cycleD}}
Consider the measure $m$ induced by set $\Gamma$, which is the invariant measure of random walk $R$. If we connect every two geometric terms with the same horizontal or vertical coordinates from set $\Gamma$ with a line segment, then these line segments cannot form a cycle.
\end{lemma}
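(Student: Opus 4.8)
The plan is to argue by contradiction: suppose the line segments connecting geometric terms in $\Gamma$ with a shared coordinate do form a cycle, and derive a contradiction with the fact that $m$ is the invariant measure of $R$. Along the cycle, consecutive terms alternate between sharing a horizontal coordinate and sharing a vertical coordinate (a horizontal edge connects $(\rho,\sigma)$ and $(\tilde\rho,\sigma)$ with $\rho\neq\tilde\rho$; a vertical edge connects $(\rho,\sigma)$ and $(\rho,\tilde\sigma)$ with $\sigma\neq\tilde\sigma$), since two terms sharing \emph{both} coordinates are equal. Hence a cycle has even length, say the vertices are $(\rho_1,\sigma_1),(\rho_2,\sigma_2),\dots,(\rho_{2\ell},\sigma_{2\ell})$ with, after relabeling, $\sigma_1=\sigma_2$, $\rho_2=\rho_3$, $\sigma_3=\sigma_4$, $\dots$, $\rho_{2\ell}=\rho_1$.

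The key mechanism is Lemma~\ref{lem:6D}: since $m$ is invariant, $B^h(\rho,\sigma)=0$ and $B^v(\rho,\sigma)=0$ for every $(\rho,\sigma)\in\Gamma$. Fixing one edge, say the vertical edge $\{(\rho_1,\sigma_1),(\rho_1,\sigma_{2\ell})\}$ — wait, more carefully: along each horizontal edge $B^h$ relates the two coefficients (the terms with equal $\rho$), and along each vertical edge $B^v$ relates them. Concretely, if the maximal uncoupled structure is such that the only terms of $\Gamma$ sharing horizontal coordinate $\rho_{2k}=\rho_{2k+1}$ are exactly those two, then $B^v(\rho_{2k},\sigma_{2k})=0$ forces $\alpha(\rho_{2k+1},\sigma_{2k+1}) = c_k\,\alpha(\rho_{2k},\sigma_{2k})$ for an explicit nonzero constant $c_k$ (of the form $-T_{2k}/T_{2k+1}$ or $-W_{2k-1}/W_{2k}$ as in the coefficient-construction subsection), and similarly each horizontal edge gives $\alpha(\rho_{2k},\sigma_{2k}) = d_k\,\alpha(\rho_{2k-1},\sigma_{2k-1})$. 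Chasing these relations once around the cycle yields $\alpha(\rho_1,\sigma_1) = \Big(\prod_{k} c_k d_k\Big)\alpha(\rho_1,\sigma_1)$, so the product of all the edge constants must equal $1$. The plan is to show this product can be computed explicitly and is \emph{generically} not $1$ — or, better, to show that if it equals $1$ then two of the geometric terms must coincide, contradicting that $\Gamma$ is a genuine set of distinct terms (or contradicting $\alpha(\rho,\sigma)\neq 0$). Here I would use the defining polynomial relations: each $(\rho_k,\sigma_k)\in Q$, and the edge constants $W_k,T_k$ are built from $Q,H,V$ evaluated along the curve; the cycle closing forces an algebraic identity among the $\rho_k,\sigma_k$ that cannot hold for distinct points on $Q$. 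One clean route: since along $Q$ for a fixed $\rho$ there are at most two $\sigma$ with $Q(\rho,\sigma)=0$ and these pair up via the quadratic, the "partner" maps (fix $\rho$, flip $\sigma$; fix $\sigma$, flip $\rho$) are involutions, and a cycle would mean a finite orbit under the composition of these two involutions returns to the start after $2\ell$ steps; combined with the invariance constraint forcing a multiplicative relation on coefficients, one reaches a contradiction either with irreducibility/positive-recurrence (the associated drift condition) or with the count in Lemma~\ref{lem:HVsetD}.

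I would structure the write-up as: (i) observe edges alternate H/V, so a cycle has even length $2\ell$; (ii) invoke Lemma~\ref{lem:6D} on each vertex of the cycle to get, for each edge, a linear relation between the two adjacent coefficients with an explicit nonzero multiplier; (iii) compose around the cycle to conclude $\prod(\text{multipliers}) = 1$; (iv) show this product, expressed via the $W$'s and $T$'s (equivalently via $Q,H,V$ along the curve), forces a coincidence of two terms $(\rho_k,\sigma_k)=(\rho_{k'},\sigma_{k'})$ or forces some $\alpha=0$, the desired contradiction.

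\textbf{Main obstacle.} The crux is step (iv): turning "the product of edge multipliers equals $1$" into a genuine contradiction. The multipliers are ratios of the quantities $W_k = (1-\tfrac1{\sigma_k})v_1 + (1-\sigma_k)v_{-1} + \sum_t p_{1,t} - \rho_k\sum_t \sigma_k^{-t}p_{-1,t}$ and the analogous $T_k$, and these are precisely (up to normalization) the values $V(\rho_k,\sigma_k)/(\text{stuff})$ and $H(\rho_k,\sigma_k)/(\text{stuff})$ — but the relevant points are generally \emph{not} on $H$ or $V$, so these are nonzero and the telescoping is not obviously trivial. I expect one must exploit that consecutive vertices lie on $Q$ and are related by the branch-point involutions, so that $W_{2k-1}/W_{2k}$ and $W$ at the $\sigma$-flipped point are governed by the factorization of $Q$ in $y$ (a quadratic with the two roots $\sigma_{2k-1},\sigma_{2k}$ ... no, $\sigma$ is shared there; rather in $x$ for the horizontal edges). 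Making the bookkeeping of which variable is flipped on which edge precise, and then recognizing the resulting product as a telescoping product of resultant-type expressions that cannot equal $1$ for a nondegenerate cycle, is where the real work lies; alternatively, one falls back on the already-proven structural results from~\cite{chen2012invariant} (the uniqueness of maximal uncoupled partitions and the pairwise-coupled classification) to rule out the cyclic configuration more directly, which may be the cleaner path and the one I would pursue first.
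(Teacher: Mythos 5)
There is a genuine gap, and you have correctly located it yourself: step (iv) is the entire content of the lemma and your proposal does not supply it. Steps (i)--(iii) are sound but essentially vacuous — composing the two-term relations from Lemma~\ref{lem:6D} around a cycle yields $\prod_k c_k d_k = 1$, which is precisely the \emph{consistency} condition for a nonzero coefficient vector to exist on the cycle; nothing in the balance equations forbids this product from equalling $1$ for some random walk, and "generically not $1$" is not a proof of a statement that must hold for every $R$. (There is also a secondary gap: $B^h$ and $B^v$ sum over \emph{all} terms of $\Gamma$ sharing a coordinate, so the clean two-term multiplier relations only hold if no terms outside the cycle share a coordinate with it, which you assume without justification.) The algebraic route through the coefficients cannot close on its own; some geometric input about where the points of $\Gamma$ sit on the curve $Q$ is needed.

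The paper's proof supplies exactly that geometric input and bypasses the coefficients entirely. The curve $Q$ has a single connected component in $[0,\infty)^2$ with horizontal branch points $(x_b,y_b),(x_t,y_t)$ and vertical branch points $(x_l,y_l),(x_r,y_r)$, and is cut into four monotone arcs $Q_{00},Q_{01},Q_{10},Q_{11}$. Your "partner maps" (fix $\rho$, flip $\sigma$; fix $\sigma$, flip $\rho$) are indeed the relevant involutions, and a cycle forces the resulting orbit to wrap all the way around the oval — in particular, decomposing $\Gamma$ into maximal monotone staircases $\Gamma_1,\dots,\Gamma_K$, a cycle forces two distinct staircases to be joined at a point of $Q_r$, and hence forces some term of $\Gamma$ onto the arc $Q_{11}$ joining $(x_r,y_r)$ to $(x_t,y_t)$. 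Since $x_r\geq 1$ and $y_t\geq 1$ (Lemma~2.3.8 of Fayolle--Iasnogorodski) and $Q_{11}$ is monotone, every point of $Q_{11}$ lies outside the open unit square, so the corresponding geometric term $\rho^i\sigma^j$ is not summable — contradicting that $m$ is a finite invariant measure. If you want to salvage your write-up, replace steps (iii)--(iv) with this argument: the contradiction comes from summability of the measure, not from the coefficient identities.
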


In order to prove Lemma~\ref{lem:cycleD}, we define two types of partition of $Q$, see Figure~\ref{fig:partitionQD}.

\begin{definition}[Partition \rom{1} of $Q$]
The partition $\{Q_{00}, Q_{01}, Q_{10}, Q_{11}\}$ of $Q$ is defined as follows: $Q_{00}$ is the part of $Q$ connecting $(x_l, y_l)$ and $(x_b, y_b)$; $Q_{10}$ is the part of $Q$ connecting $(x_b, y_b)$ and $(x_r, y_r)$; $Q_{01}$ is the part of $Q$ connecting $(x_l, y_l)$ and $(x_t, y_t)$; $Q_{11}$ is the part of $Q$ connecting $(x_r, y_r)$ and $(x_t, y_t)$.
\end{definition}

\begin{definition}[Partition \rom{2} of $Q$]
Let $\{Q_l, Q_c, Q_r\}$ denote a partition of $Q$, where 
\begin{equation*}
\begin{aligned}
Q_l &= \left\{ (x,y)\in Q\; \middle|\; x \leq x_b \right\}, \\
Q_c &= \left\{ (x,y)\in Q\; \middle|\; x_b <x \leq x_t \right\}, \\
Q_r &= \left\{ (x,y)\in Q\; \middle|\; x > x_t \right\}.
\end{aligned}
\end{equation*}
\end{definition}

\begin{figure}
\subfigure[]{\includegraphics{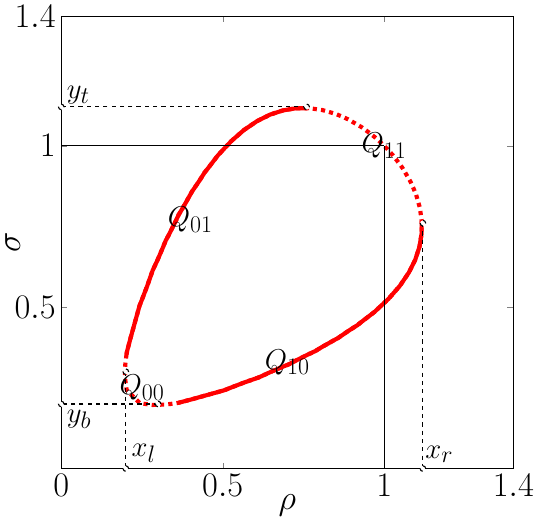}{\label{fig:partition1D}}}
\subfigure[]{\includegraphics{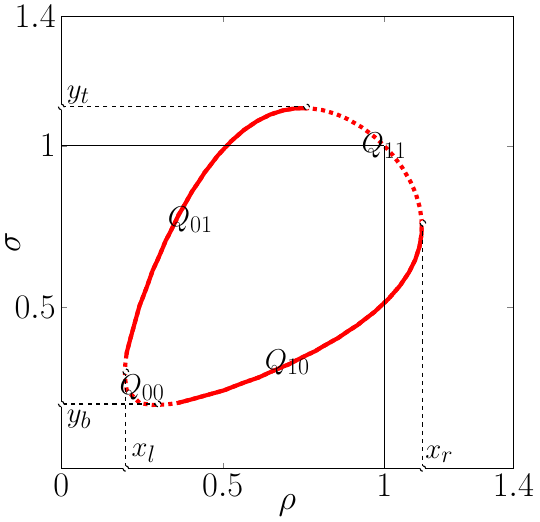}{\label{fig:partition2D}}}
\caption{Different partition of $Q$.~\subref{fig:partition1D} Partition I of $Q$.~\subref{fig:partition2D} Partition II of $Q$.\label{fig:partitionQD}}
\end{figure}

\begin{proof}[Proof of Lemma~\ref{lem:cycleD}]
Denote the two pieces of $Q_c$ in Figure~\ref{fig:partition2D} by $Q_c^t$ and $Q_c^b$ satisfying $\tilde{y} > y$ if $(x,\tilde{y}) \in Q_c^t$ and $(x,y) \in Q_c^b$. Since the algebraic curve $Q$ contains no singularity, because of Theorem~12 from~\cite{chen2013necessary}, $Q_l, Q_c$ and $Q_r$ are all non-empty. 

 In addition, we let $\{\Gamma_1,\dots,\Gamma_K\}$ denote a partition of $\Gamma$, where the elements of $\Gamma_i$ are denoted by $\Gamma_i=\{(\rho_{i,1},\sigma_{i,1}),\dots,(\rho_{i,L(i)},\sigma_{i,L(i)})\}$ and each $\Gamma_i$ satisfies
 \begin{equation} \label{eq:thmpropertyCstairD}
 \begin{IEEEeqnarraybox}[][c]{rClrCl}
 \rho_{i,1} &>& \rho_{i,2},\quad & \sigma_{i,1} &=& \sigma_{i,2}, \\
 \rho_{i,2} &=& \rho_{i,3},\quad & \sigma_{i,2} &>& \sigma_{i,3}, \\ 
 \rho_{i,3} &>& \rho_{i,4},\quad & \sigma_{i,3} &=& \sigma_{i,4}, \\
 &\vdots& & &\vdots& \\
 \rho_{i,L(i)-1} &>& \rho_{i,L(i)},\quad & \sigma_{i,L(i)-1} &=& \sigma_{i,L(i)}.
 \end{IEEEeqnarraybox}
 \end{equation}
In addition the partition $\{\Gamma_1,\dots,\Gamma_K\}$ is maximal in the sense that no $\Gamma_i\cup\Gamma_j$, $i\neq j$ satisfies~\eqref{eq:thmpropertyCstairD}. 

Assume that the line segments, which connect every two geometric terms with the same horizontal or vertical coordinates from set $\Gamma$, form a cycle. Without loss of generality, we will have $\Gamma_1$, $\Gamma_2$ where $|\Gamma_1| > 1$ and $|\Gamma_2| > 1$ such that $\rho_{1,1} = \rho_{2,1}$ and $\rho_{1,1},\rho_{2,1} \in Q_r$. Moreover, either $\rho_{1,1}$ or $\rho_{2,1}$ must be on $Q_{11}$. However, $y_t \geq 1$ and $x_r \geq 1$, by~\cite[Lemma 2.3.8]{fayolle1979two}. Also, using the fact that $Q_{11}$ is monotonic, by Lemma~9 from~\cite{chen2013necessary}, we conclude that $Q_{11}$ is outside of $U$, which contradicts that $m$ is a finite measure.
\end{proof}

We are now ready to prove Theorem~\ref{thm:oddD}.

\begin{proof}[Proof of Theorem~\ref{thm:oddD}]
First, it follows from Theorem~4,~\cite{chen2012invariant} that $\Gamma$ must be a pairwise-coupled set.

From Lemma~\ref{lem:cycleD}, the pairwise-coupled set $\Gamma$ cannot form a cycle. Hence, there must be two geometric terms which do not share the horizontal or vertical coordinate with other geometric terms from set $\Gamma$. We denote these two geometric terms by $(\rho_1, \sigma_1), (\rho_2, \sigma_2)$. 

It follows from Lemma~\ref{lem:6D} that the measure induced by any two geometric terms from set $\Gamma$, which have the same horizontal coordinates, must satisfy the horizontal balance equation. 

Without loss of generality, we assume that $(\rho_1, \sigma_1), (\rho_2, \sigma_2) \in H_{set}$. Thus, for $k = 1,2$, we have
\begin{align}{\label{eq:HandQD}}
B^h(\rho_k, \sigma_k) = \sum_{s=-1}^1 \big(\rho_k^{1-s} h_s+\rho_k^{1-s}\sigma_k p_{s,-1}\big) - \rho_k = 0.  
\end{align}
Hence, for $k = 1,2$, there exists no $(\rho, \sigma) \in \Gamma \backslash (\rho_k, \sigma_k)$ such that $\rho = \rho_k$. Otherwise, the balance for $(\rho_k, \sigma_k)$ and $(\rho, \sigma)$ cannot be satisfied. Moreover, because $\Gamma$ is a pairwise-couple set, there exist a $(\rho, \sigma) \in \Gamma \backslash (\rho_k, \sigma_k)$ such that $\sigma = \sigma_k$. Similar results hold when $(\rho_1, \sigma_1), (\rho_2, \sigma_2) \in V_{set}$ or when $(\rho_1, \sigma_1) \in H_{set}$ and $(\rho_2, \sigma_2) \in V_{set}$.

It can be readily verified that if $(\rho_1, \sigma_1) \in H_{set}$ and $(\rho_2, \sigma_2) \in V_{set}$, then $|\Gamma| = 2k+1$, where $k = 1,2,3, \cdots$. Otherwise, we have $|\Gamma| = 2k$ where $k = 1,2,3, \cdots$.

Finally, if such pairs $(\rho_1, \sigma_1), (\rho_2, \sigma_2)$ are not unique, then, by carefully choosing the coefficients, we find $2$ signed measures to make all balance equations satisfied. However, this contradicts the uniqueness of the invariant measure, which completes the proof.
\end{proof}

\section{Proof of Theorem~\ref{thm:maximalstepsD}}{\label{app:proofD}}

\begin{proof}[Proof of Theorem~\ref{thm:maximalstepsD}]
Similar to the proof of Lemma~\ref{lem:cycleD}, we find $\{\Gamma_1,\dots,\Gamma_K\}$ which are defined in~\eqref{eq:thmpropertyCstairD}.

First, we prove $L(i)<\infty$ by demonstrating that 
\begin{equation*}
 |\Gamma_i\cap Q_l| \leq 1,\quad \left|\Gamma_i\cap Q_c\right| <\infty,
\quad \left|\Gamma_i\cap Q_r\right| \leq 1.
\end{equation*}
Suppose that $\left|\Gamma_i\cap Q_r\right| \geq 2$. Then there exist $(\rho, \sigma)$ and $(\tilde{\rho}, \tilde{\sigma})$ on $Q_{11}$ or $Q_{10}$ satisfying $\tilde{\sigma} = \sigma$. This contradicts Lemma~9,~\cite{chen2013necessary}, which indicates the monotonicity of $Q_{11}$ and $Q_{10}$.
 
Therefore, $\left|\Gamma_i\cap Q_r\right| \leq 1$. Similarly, we show $\left|\Gamma_i\cap Q_l\right| \leq 1$.  

Next, we prove that $\sigma_{i,j+2}\leq \sigma_{i,j}-\min(D_1, D_2)$ where
\begin{equation*}
D_1 = \frac{\Delta_y(x_b)}{\sum_{s = -1} ^{1} p_{s,-1} x_t^{1 - s}}, D_2 = \frac{\Delta_y(x_t)}{\sum_{s = -1} ^{1} p_{s,-1} x_t^{1 - s}},
\end{equation*}
for three consecutive elements in $\left|\Gamma_i\cap Q_c\right|$, $(\rho_{i,j}, \sigma_{i,j})$, $(\rho_{i+1, j+1}, \sigma_{i+1, j+1})$ and $(\rho_{i+2, j+2}, \sigma_{i+2, j+2})$ satisfying
 \begin{equation*}
 \begin{IEEEeqnarraybox}[][c]{rClrCl}
 \rho_{i,j} &>& \rho_{i,j+1},\quad & \sigma_{i,j} &=& \sigma_{i,j+1}, \\
 \rho_{i,j+1} &=& \rho_{i,j+2},\quad & \sigma_{i,j+1} &>& \sigma_{i,j+2}.
 \end{IEEEeqnarraybox}
 \end{equation*} 
Note that $\Delta_y(x) > 0$ and $\Delta_y(x)$ has at most one stationary point where the derivative is $0$ for $x \in (x_b, x_t)$ because $\Delta_y(x)$ is continuous over $x$ and $\Delta_y(x) = 0$ has $4$ real solutions due to Lemma~4,~\cite{chen2013necessary}. We obtain that $\Delta_y(x) \geq \min(\Delta_y(x_b), \Delta_y(x_t))$. Moreover, it can be readily verified that $\sum_{s = -1}^{1} p_{s,-1} x^{1 - s}$ is monotonically increasing in $x$ for $x \in (x_b, x_t)$. Therefore, we have
\begin{equation}{\label{eq:lengthrelationD}}
\frac{\Delta_y(x)}{\sum_{s = -1}^{1} p_{s,-1} x^{1 - s}} \geq \min\left(\frac{\Delta_y(x_b)}{\sum_{s = -1}^{1} p_{s,-1} x_t^{1 - s}}, \frac{\Delta_y(x_t)}{\sum_{s = -1}^{1} p_{s,-1} x_t^{1 - s}}\right),
\end{equation}
for $x\in (x_b, x_t)$.

Notice that the left side of equation~\eqref{eq:lengthrelationD} is the distance between two intersections of $Q$ and a vertical line, \ie $\frac{\Delta_y(a)}{\sum_{s = -1}^{1} p_{s,-1} a^{1 - s}}$ is the distance between two intersections of $Q$ and line $x = a$. Therefore, we conclude that $\sigma_{i,j+2}\leq \sigma_{i,j}-\min(D_1, D_2)$ where
\begin{equation*}
D_1 = \frac{\Delta_y(x_b)}{\sum_{s = -1} ^{1} p_{s,-1} x_t^{1 - s}}, D_2 = \frac{\Delta_y(x_t)}{\sum_{s = -1} ^{1} p_{s,-1} x_t^{1 - s}}.
\end{equation*}

Next, we show that if $K > 2$, then there exists a $(\rho, \sigma) \in \Gamma$ such that $\rho > 1$ or $\sigma > 1$. Without loss of generality, we assume $K=3$. Observe that  $\{\Gamma_1,\Gamma_2,\Gamma_3\}$ forms a pairwise-coupled set. Using from the above that $|\Gamma_i|<\infty$ for $i=1,2,3$, we must have $\rho_{1,L(1)} = \rho_{2,L(2)}$ with $\rho_{1, L(1)}, \rho_{2, L(2)} \in Q_l$ and $\rho_{2, 1} = \rho_{3,1}$ with $\rho_{2, 1},\rho_{3,1} \in Q_r$ after a proper ordering of $\{\Gamma_1,\Gamma_2,\Gamma_3\}$. Moreover, either $\rho_{2,1}$ or $\rho_{3,1}$ must be on $Q_{11}$. However, $y_t \geq 1$ and $x_r \geq 1$ due to~\cite[Lemma 2.3.8]{fayolle1979two}. Also, using the fact that $Q_{11}$ is monotonic, due to Lemma~9 from~\cite{chen2013necessary}, we conclude that $Q_{11}$ is outside of $U$. Hence, there exists a $(\rho, \sigma) \in \Gamma$ such that $\rho > 1$ or $\sigma > 1$.

When $K \leq 2$, we know that the distance between two intersections of $Q$ and a vertical line $x = a$ where $a \in (x_b, x_t)$ is at least $\min(D_1, D_2)$. 

Therefore, we conclude that if  
\begin{equation*}
|\Gamma| > M(R) =  \frac{6}{\min(D_1, D_2)} + 4,
\end{equation*}
then there exists a $(\rho, \sigma) \in \Gamma$ such that $\rho > 1$ or $\sigma > 1$.
We know from Theorem~12,~\cite{chen2013necessary}, that the algebraic curve $Q$ can only have an accumulation point at the origin when $p_{1,0} + p_{1,1} + p_{0,1} = 0$. Hence, when $p_{1,0} + p_{1,1} + p_{0,1} \neq 0$, we have $x_b > 0$ and $x_t > 0$. This means that $D_1 > 0$ and $D_2> 0$. Therefore, we have $M(R) < \infty$.
\end{proof}

\end{document}